\newcommand{\scr}[1]{\ensuremath{\mathcal{#1}}}
\newcommand{\bb}[1]{\ensuremath{\mathbb{#1}}}
\newcommand{\into}{\hookrightarrow}
\theoremstyle{definition}
\newtheorem{theorem}{Theorem}[section]
\newtheorem{proposition}[theorem]{Proposition}
\newtheorem{remark}[theorem]{Remark}
\newtheorem{example}[theorem]{Example}
\newtheorem{definition}[theorem]{Definition}
\newtheorem{corollary}[theorem]{Corollary}
\newtheorem{lemma}[theorem]{Lemma}
\newcommand{\R}{\mathbb{R}}
\newcommand{\C}{\mathbb{C}}
\newcommand{\Z}{\mathbb{Z}}
\renewcommand{\P}{\mathbb{P}}
\newcommand{\derive}[2]{\frac{\partial^{#2} #1}{\partial t^{#2}}}
\begin{document}

\title{On a family of K3 surfaces with $\mathcal{S}_4$ symmetry}
\author{Dagan Karp, Jacob Lewis, Daniel Moore, Dmitri Skjorshammer, and Ursula Whitcher\thanks{We thank Andrey Novoseltsev for thoughtful discussion of computational techniques, and Charles Doran for inspirational conversations.  We gratefully acknowledge support by the National Science Foundation under the Grants
DMS-083996 and OISE-0965183.  Some of our computations use systems funded by National Science Foundation Grant No. DMS-0821725.
Any opinions, findings, and conclusions or
recommendations expressed in this material are those of the authors
and do not necessarily reflect the views of the National Science
Foundation.}}
\date{}
\maketitle

\abstract{The largest group which occurs as the rotational symmetries of a three-dimensional reflexive polytope is $S_4$.   There are three pairs of three-dimensional reflexive polytopes with this symmetry group, up to isomorphism.  We identify a natural one-parameter family of K3 surfaces corresponding to each of these pairs, show that $S_4$ acts symplectically on members of these families, and show that a general K3 surface in each family has Picard rank $19$.  The properties of two of these families have been analyzed in the literature using other methods.  We compute the Picard-Fuchs equation for the third Picard rank $19$ family by extending the Griffiths-Dwork technique for computing Picard-Fuchs equations to the case of semi-ample hypersurfaces in toric varieties.  The holomorphic solutions to our Picard-Fuchs equation exhibit modularity properties known as ``Mirror Moonshine''; we relate these properties to the geometric structure of our family.}

\section{Introduction}

Families of Calabi-Yau varieties with discrete symmetry groups provide a fertile source of examples and conjectures in geometry and theoretical physics.  Greene and Plesser's construction of the mirror to a family of Calabi-Yau threefolds relied on the construction of a special pencil of threefolds admitting a discrete group symmetry (see \cite{GP}).  More recent studies of Calabi-Yau threefolds with discrete symmetry groups include \cite{DGJ}, \cite{BvGK}, and \cite{LR}.

In the case of K3 surfaces, actions of a finite group of \emph{symplectic} automorphisms, which preserve the holomorphic two-form, are of particular interest.  Nikulin classified the finite abelian groups which can act symplectically on K3 surfaces in \cite{Nikulin}.
Mukai showed in \cite{Mukai} that any finite group $G$ with a symplectic action on a K3 surface is a subgroup of a member of a list of eleven groups, and gave an example of a symplectic action of each of these maximal groups.  Xiao and Kond{\=o} gave alternate proofs of the classification in \cite{Xiao} and \cite{Kondo}, respectively; \cite[Table 2]{Xiao} includes a complete list of finite groups which admit symplectic group actions on K3 surfaces.  

If a K3 surface $X$ admits a symplectic action by a group $G$, then the Picard group of $X$ must contain a primitive definite sublattice $S_G$; in \cite{CommAlg}, the third author gives a procedure for computing the lattice invariants of $S_G$ for any of the groups in \cite[Table 2]{Xiao}.  The relationship between a symplectic action and the Picard group has been worked out in detail for particular finite groups: cf. \cite{OZ},  \cite{Gar08}, \cite{Gar09}, \cite{Garbagnati}, \cite{GarSarti} and \cite{Hashimoto}.  Thus, symplectic group actions may be used to identify K3 surfaces with high Picard rank.  

Families of K3 surfaces with Picard rank $19$ admit a particularly nice construction of the mirror map, which relates the moduli of a family of K3 surfaces to the moduli of the mirror family (see \cite{Dolgachev}).  One may study the mirror map using Picard-Fuchs differential equations.  In \cite{Doran}, Doran uses Picard-Fuchs differential equations to determine when a mirror map for a Picard rank $19$ family of K3 surfaces is an automorphic function.  The dissertation \cite{Smith} uses symplectic group actions to produce pencils of K3 surfaces with Picard rank $19$ in projective space $\mathbb{P}^3$ and the weighted projective space $\mathbb{P}(1,1,1,3)$, and computes the associated Picard-Fuchs equations.  

In this paper, we study special one-parameter families of K3 hypersurfaces in toric varieties obtained from three-dimensional reflexive polytopes.  The key idea is to use symmetries of the polytopes to identify a group action on a family of hypersurfaces.  The largest group which occurs as the rotational symmetries of a three-dimensional reflexive polytope is $S_4$.  Up to automorphism, there are three pairs of three-dimensional reflexive polytopes with this symmetry group.  We identify a natural one-parameter family of K3 surfaces corresponding to each of these pairs, show that $S_4$ acts symplectically on members of these families, and show that a general K3 surface in each family has Picard rank $19$.  

The Picard-Fuchs equations for two of our families were analyzed in \cite{PS}, \cite{Verrill}, and \cite{HLOY}.  We compute the Picard-Fuchs equation for the third Picard rank $19$ family, using coordinates which arise naturally from the reflexive polytope.  In order to do so, we extend the Griffiths-Dwork algorithm to the case of semi-ample hypersurfaces in toric varieties.  Our method relies on the theory of residue maps for hypersurfaces in toric varieties developed in \cite{BC} and extended in \cite{Mavlyutov}.  


\section{Toric varieties and semiample hypersurfaces}

\subsection{Toric varieties and reflexive polytopes}

We begin by recalling some standard constructions involving toric varieties.  Let $N$ be a lattice isomorphic to $\mathbb{Z}^n$.  The dual lattice $M$ of $N$ is given by $\mathrm{Hom}(N, \mathbb{Z})$; it is also isomorphic to $\mathbb{Z}^n$.  We write the pairing of $v \in N$ and $w \in M$ as $\langle v , w \rangle$.  A \emph{cone} in $N$ is a subset of the real vector space $N_\mathbb{R} = N \otimes \mathbb{R}$ generated by nonnegative $\mathbb{R}$-linear combinations of a set of vectors $\{v_1, \dots , v_m\} \subset N$.  We assume that cones are strongly convex, that is, they contain no line through the origin.  Note that each face of a cone is a cone.  

A \emph{fan} $\Sigma$ consists of a finite collection of cones such that each face of a cone in the fan is also in the fan, and any pair of cones in the fan intersects in a common face.  We say $\Sigma$ is \emph{simplicial} if the generators of each cone in $\Sigma$ are linearly independent over $\mathbb{R}$.  If every element of $N_\R$ belongs to some cone in $\Sigma$, we say $\Sigma$ is \emph{complete}.  In the following, we shall restrict our attention to complete fans.

A fan $\Sigma$ defines a toric variety $V_\Sigma$.  We may describe $V_\Sigma$ using homogeneous coordinates, in a process analogous to the construction of $\mathbb{P}^n$ as a quotient space of $(\mathbb{C}^*)^n$.  We follow the exposition in \cite{MirrorSymmetry}.  Let $\Sigma(1) = \{\rho_1, \dots, \rho_q\}$ be the set of one-dimensional cones of $\Sigma$.  For each $\rho_j \in \Sigma(1)$, let $v_j$ be the unique generator of the semigroup $\rho_j \cap N$.  

We construct the toric variety $V_\Sigma$ as follows.  To each edge $\rho_j \in \Sigma(1)$, we associate a coordinate $z_j$.  Let $\mathcal{S}$ denote any subset of $\Sigma(1)$ that does \emph{not} span a cone of $\Sigma$.  Let $\mathcal{V}(\mathcal{S})\subseteq \C^q$ be the linear subspace defined by setting $z_j = 0$ for each $\rho_j\in \mathcal{S}$.  Let $Z(\Sigma)$ be the union of the spaces $\mathcal{V}(\mathcal{S})$.  Note that $(\C^*)^q$ acts on $\C^q - Z(\Sigma)$ by coordinate-wise multiplication.  Fix a basis for $N$, and suppose that $v_j$ has coordinates $(v_{j 1}, \dots, v_{j n})$ with respect to this basis.  Consider the map $\phi : (\C^*)^q \to (\C^*)^n$ given by
\[ \phi(t_1, \dots, t_q) \mapsto \left( \prod_{j=1}^n t_j ^{v_{j1}} , \dots, \prod_{j=1}^n t_j^{v_{j n}} \right) \]
Then the toric variety $V_\Sigma$ associated with the fan $\Sigma$ is given by
\[ V_\Sigma  = (\C^q - Z(\Sigma)) / \text{Ker}(\phi).\]

Given a lattice polytope $\diamond$ in $N$, we define its \textit{polar polytope} $\diamond^\circ$ to be $\diamond^\circ = \{w \in $M$ \, | \, \langle v , w \rangle \geq -1 \, \forall \, v \in K\}$.  If $\diamond^\circ$ is also a lattice polytope, we say that $\diamond$ is a reflexive polytope and that $\diamond$ and $\diamond^\circ$ are a mirror pair.  

\example{The generalized octahedron in $N$ with vertices at $(\pm1,0,\dots,0)$, $(0,\pm1,\dots,0),$ $\dots,$ $(0,0,\dots,\pm1)$ is a reflexive polytope.  Its polar polytope is the hypercube with vertices at $(\pm1,\pm1,\dots,\pm1)$.}

A reflexive polytope must contain $\vec{0}$; furthermore, $\vec{0}$ is the only interior lattice point of the polytope.  We may obtain a fan $R$ by taking cones over the faces of $\diamond$.  Let $\Sigma$ be a simplicial refinement of $R$ such that the one-dimensional cones of $\Sigma$ are generated by the nonzero lattice points $v_k$, $k = 1 \dots q$, of $\diamond$; we call such a refinement a \emph{maximal projective subdivision}.  Then the variety $V_\Sigma$ is an orbifold; if $n=3$, $V_\Sigma$ is smooth (see \cite{CoxKatz}).  

\begin{example}\label{E:octahedron}
Let $N \cong \Z^3$, and let $\diamond$ be the octahedron with vertices $v_1=(1,0,0)$, $v_2=(0,1,0)$, $v_3=(0,0,1)$, $v_4=(-1,0,0)$, $v_5=(0,-1,0)$, and $v_6=(0,0,-1)$.  Then the only lattice points of $\diamond$ are the vertices and the origin.  Let $R$ be the fan obtained by taking cones over the faces of $\diamond$.  Then $R$ defines a toric variety $V_R$ which is isomorphic to $\P^1 \times \P^1 \times \P^1$.
\end{example}

\begin{figure}[h!]
\begin{center}
\scalebox{.5}{\includegraphics{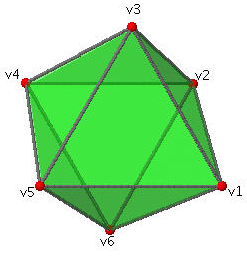}}
\end{center}
\caption{The octahedron of Example~\ref{E:octahedron}}
\end{figure}

\begin{proof}
The vertices of the octahedron $v_1, \dots, v_6$ generate the one-dimensional cones $\rho_1, \dots, \rho_6$ of $R$.  The two-element subsets of $\Sigma(1)$ that do not span cones are $\{\rho_1,\rho_4\}$, $\{\rho_2, \rho_5\}$, and $\{\rho_3, \rho_6\}$; larger subsets of $\Sigma(1)$ that do not span cones contain one of the two-element subsets that do not span cones.  Thus, $Z(\Sigma)$ consists of points of the form $(0,z_2,z_3,0,z_5,z_6)$, $(z_1,0,z_3,z_4,0,z_6)$, or $(z_1,z_2,0,z_4,z_5,0)$.

The map $\phi$ is given by:
$$
\phi(t_1,t_2,t_3,t_4,t_5,t_6) = (t_1t_4^{-1},t_2t_5^{-1},t_3t_6^{-1})
$$
Then $V_R$ is given by the quotient $\C^6\backslash Z(\Sigma)/\ker(\phi)$, where $\ker(\phi)$ contains points satisfying $t_1=t_4,t_2=t_5$, and $t_3=t_6$.  This corresponds to the equivalence relations
\begin{align*}
(z_1,z_2,z_3,z_4,z_5,z_6) &\sim (\lambda_1 z_1,z_2,z_3,\lambda_1 z_4,z_5,z_6)\\
(z_1,z_2,z_3,z_4,z_5,z_6) &\sim (z_1,\lambda_2 z_2,z_3,z_4,\lambda_2 z_5,z_6)\\
(z_1,z_2,z_3,z_4,z_5,z_6)&\sim (z_1,z_2,\lambda_3 z_3,z_4,z_5,\lambda_3 z_6)
\end{align*}
where $\lambda_1,\lambda_2,\lambda_3\in \C^*$. Thus, $V_R$ is isomorphic to the toric variety $\P\times \P\times \P$.
\end{proof}

\begin{example}\label{E:skewOctahedron}
Let $N \cong \Z^3$, and let $\diamond$ be the octahedron with vertices $v_1 = (1,0,0)$, $v_2 = (1,2,0)$, $v_3 = (1,0,2)$, $v_4 =(-1,0,0)$, $v_5 = (-1,-2,0)$, and $v_6 = (-1,0,-2)$.  Let $R$ be the fan obtained by taking cones over the faces of $\diamond$.  Then $R$ defines a toric variety $V_R$ which is isomorphic to $(\P\times \P\times \P)/(\Z_2\times \Z_2\times \Z_2)$.   If $\Sigma$ is a simplicial refinement of $R$ such that the one-dimensional cones of $\Sigma$ are generated by the nonzero lattice points of $\diamond$, then $V_\Sigma$ is a smooth variety and the map $V_\Sigma \to V_R$ is a resolution of singularities.
\end{example}

\begin{figure}[h!]
\begin{center}
\scalebox{.5}{\includegraphics{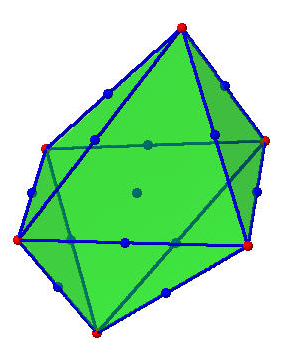}}
\end{center}
\caption{The octahedron of Example~\ref{E:skewOctahedron}}
\end{figure}

\begin{proof}
As in Example~\ref{E:octahedron}, $Z(\Sigma)$ consists of points of the form $(0,z_2,z_3,0,z_5,z_6)$, $(z_1,0,z_3,z_4,0,z_6)$, or $(z_1,z_2,0,z_4,z_5,0)$.  The map $\phi$ is defined as
$$
\phi(t_1,t_2,t_3,t_4,t_5,t_6) = (t_1t_2t_3t_4^{-1}t_5^{-1}t_6^{-1}, t_2^2 t_5^{-2}, t_3^2 t_6^{-2}).
$$
Thus, elements of $\ker(\phi)$ must satisfy $t_1t_2t_3 = t_4t_5t_6$, $t_2^2 = t_5^{2}$, and $t_3^2 = t_6^{2}$. These equations simplify to $t_1^2 = t_4^2, t_2^2 = t_5^2$ and $t_3^2 = t_6^2$.  We obtain the equivalence relations
\begin{align*}
(z_1,z_2,z_3,z_4,z_5,z_6) &\sim (\lambda_1 z_1,z_2,z_3,\pm\lambda_1 z_4,z_5,z_6)\\
(z_1,z_2,z_3,z_4,z_5,z_6) &\sim (z_1,\lambda_2 z_2,z_3,z_4,\pm\lambda_2 z_5,z_6)\\
(z_1,z_2,z_3,z_4,z_5,z_6)&\sim (z_1,z_2,\lambda_3 z_3,z_4,z_5,\pm\lambda_3 z_6)
\end{align*}
where $\lambda_1,\lambda_2,\lambda_3\in \C^*$.  We conclude that $V_R$ is isomorphic to $(\P^1\times \P^1\times \P^1)/(\Z_2\times \Z_2\times \Z_2)$.  Since $n=3$, the simplicial refinement $\Sigma$ yields a smooth variety.
\end{proof}

\subsection{Semiample hypersurfaces and the residue map}\label{SS:residue}

In this section, we review properties of hypersurfaces in toric varieties, and give a brief outline of the results of \cite{Mavlyutov} on the residue map in this setting.  Let $\Sigma$ be a complete, simplicial $n$-dimensional fan, and let $S = \mathbb{C}[z_1, \dots, z_q]$ be the homogeneous coordinate ring of the corresponding toric variety $V_\Sigma$.  Each variable $z_i$ defines an irreducible torus-invariant divisor $D_i$, given by the points where $z_i = 0$.  The homogeneous coordinate ring is graded by the Chow group of $V_\Sigma$, according to the rule 
\[\mathrm{deg}(\prod_{i=1}^n x_i^{a_i}) = \sum_{i=1}^n a_i D_i.\]
A homogeneous polynomial $p$ in $S_\beta$ defines a hypersurface $X$ in $V_\Sigma$.  

\begin{definition}\cite{BC}
If the products $\partial p/\partial z_i$, $i = 1 \dots q$ do not vanish simultaneously on $X$, we say $X$ is \emph{quasismooth}.
\end{definition}

\begin{definition}\cite{Mavlyutov}If the products $z_i \, \partial p/\partial z_i$, $i = 1 \dots q$ do not vanish simultaneously on $X$, we say $X$ is \emph{regular} and $p$ is \emph{nondegenerate}. 
\end{definition}

Let $R$ be a fan over the faces of a reflexive polytope, and assume $\Sigma$ is a refinement of $R$.  We have a proper birational morphism $\pi: V_\Sigma \to V_R$.  Let $Y$ be an ample divisor in $V_R$, and suppose $X = \pi^*(Y)$.  Then $X$ is semiample:

\begin{definition}\cite[Lemma 4.1.2]{CoxKatz}
We say that a Cartier divisor $D$ is \emph{semiample} if $D$ is generated by global sections and the intersection number $D^n > 0$.
\end{definition}

Note that if $\Sigma$ is not identical to $R$, then $X$ is not ample.  If $\Sigma$ is a maximal projective subdivision of $R$, then general representatives $X$ of the anticanonical class of $V_\Sigma$ are Calabi-Yau varieties; if $n=3$, then the representatives are K3 surfaces.  (See \cite{CoxKatz} and \cite[\S 1]{Mavlyutov} for a more detailed exposition.) 

Now, let us assume that $X$ is a semiample, quasismooth hypersurface defined by a polynomial $p \in S_\beta$.  The \emph{residue map} relates the cohomology of $V_\Sigma - X$ to the cohomology of $X$:
\[\mathrm{Res}: H^n(V_\Sigma - X) \to H^{n-1}(X).\]

In order to give a precise definition of the residue map, let us represent elements of $H^n(V_\Sigma - X)$ using rational forms.  Choose an integer basis $m_1,\dots,m_n$ for the dual lattice $M$.  For any $n$-element subset $I = \{i_1, \dots, i_n\}$ of $\{1,\dots,q\}$, let $\mathrm{det}\,v_I = \mathrm{det}\,(\langle m_j, v_{i_k} \rangle_{1 \leq j, i_k \leq n})$, $dz_I = dz_{i_1} \wedge \dots \wedge dz_{i_n}$, and $\hat{z}_I = \prod_{i \notin I}z_i$.  Let $\Omega$ be the $n$-form on $V_\Sigma$ given in global homogeneous coordinates by $\sum_{|I|=n}\mathrm{det}\,v_I \hat{z}_I dz_I$.  (Note that if $V_\Sigma = \mathbb{P}^n$, then $\Omega$ is the usual holomorphic form on $\mathbb{P}^n$.)  Let $\beta_0 = \sum_{i=1}^n \mathrm{deg}(x_i)$, and let $A \in S_{(a+1) \beta - \beta_0}$.  Then the rational form $\omega_A := \frac{A \Omega}{p^{a+1}}$ is a class in $H^n(V_\Sigma-X)$.  Let $\gamma$ be any $n-1$-cycle in $X$, and let $T(\gamma)$ be the tube over $\gamma$ in $V_\Sigma - X$.  Then the residue of $\omega_A$ is the class in $H^{n-1}(X)$ satisfying
\begin{equation}
\int_{\gamma} \mathrm{Res}\left(\frac{A \Omega}{p^{a+1}}\right) = \int_{T(\gamma)} \frac{A \Omega}{p^{a+1}}.
\end{equation}
The residue class $\mathrm{Res}(\omega_A)$ lies in $H^{n-1-a,a}(X)$.  (See \cite[\S 3]{Mavlyutov}.)  We shall have occasion to use the following special case of this construction:

\begin{lemma}\label{P:Mav}\cite[\S 3]{Mavlyutov}
Let $X$ be a quasismooth K3 hypersurface in $V_\Sigma$ described in global homogeneous coordinates by a polynomial $p$.  Then $\omega := \mathrm{Res}(\Omega/p)$ generates $H^{2,0}(X)$.
\end{lemma}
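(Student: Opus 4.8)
The plan is to prove two things: that $H^{2,0}(X)$ is one dimensional, and that $\omega=\mathrm{Res}(\Omega/p)$ is a nonzero element of it. Since a nonzero vector in a line is a generator, these two facts give the lemma. The first is immediate from the definition of a K3 surface: the canonical bundle of $X$ is trivial, so by Hodge theory $H^{2,0}(X)\cong H^0(X,\Omega_X^2)=H^0(X,K_X)\cong H^0(X,\mathcal{O}_X)\cong\mathbb{C}$. For the membership $\omega\in H^{2,0}(X)$ I would specialize the general description of the residue map recalled above to the case $n=3$, $a=0$. Because $X$ is an anticanonical hypersurface, $p$ has degree $\beta=\beta_0$, so the graded piece $S_{(a+1)\beta-\beta_0}$ for $a=0$ is just $S_0=\mathbb{C}\cdot 1$; hence $\Omega/p=\omega_A$ with $A=1$ is a legitimate class in $H^{3}(V_\Sigma-X)$, and $\mathrm{Res}(\Omega/p)$ lies in $H^{\,3-1-0,\,0}(X)=H^{2,0}(X)$. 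It therefore remains only to show $\omega\neq 0$.

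To establish nonvanishing I would restrict everything to the dense algebraic torus $T\cong(\mathbb{C}^*)^3\subset V_\Sigma$. In affine coordinates $t_1,t_2,t_3$ on $T$, the form $\Omega$ restricts to a nowhere-vanishing holomorphic $3$-form $h(t)\,dt_1\wedge dt_2\wedge dt_3$, with $h$ a nonzero constant times a Laurent monomial, and $p$ restricts to a Laurent polynomial $f$ with $\{f=0\}=X\cap T$. As $X$ is not contained in the toric boundary $V_\Sigma\setminus T$ (no torus-invariant divisor is anticanonical), $f$ is a non-monomial Laurent polynomial, so $X\cap T$ is a nonempty open subset of $X$; and since $X$ is quasismooth, $X\cap T$ is smooth. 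On the open locus of $X\cap T$ where $\partial f/\partial t_3\neq 0$, the defining tube-integral identity for the residue map identifies $\omega$ with the Poincar\'e residue
\[
\omega\big|_{X\cap T}\;=\;\pm\,\frac{h}{\partial f/\partial t_3}\;dt_1\wedge dt_2\,\Big|_{\{f=0\}},
\]
a nowhere-vanishing holomorphic $2$-form there. Hence the global form $\omega$ does not vanish identically on $X$, so it is a nonzero element of the one-dimensional space $H^{2,0}(X)$ and therefore generates it.

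The step that genuinely needs care is reconciling the two descriptions of the residue: the global homogeneous-coordinate form $\Omega/p$ with its tube-integral definition, versus the classical affine Poincar\'e residue on $X\cap T$, together with the precise restriction of $\Omega$ to the torus. This is precisely the content of the residue formalism of \cite{BC} and \cite{Mavlyutov}; in fact one can read the lemma directly off their structure theorems, which identify the degree-zero graded piece of the residue map with the whole of $H^{n-1,0}(X)$. I would nevertheless carry out the explicit torus computation, since it makes the essential geometric input — nonvanishing of the Poincar\'e residue, guaranteed by quasismoothness — completely transparent.
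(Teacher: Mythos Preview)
The paper does not prove this lemma at all: it simply records the statement as a special case of the residue formalism and cites \cite[\S 3]{Mavlyutov} for it. So there is no ``paper's own proof'' to compare against; your write-up supplies an argument where the paper is content to invoke the literature.

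That said, your argument is sound and matches the way one would unwind the cited references. The three ingredients are exactly right: (i) $\dim H^{2,0}(X)=1$ from the K3 condition; (ii) with $n=3$, $a=0$, $A=1$ and $\beta=\beta_0$ (anticanonical), the general statement $\mathrm{Res}(\omega_A)\in H^{n-1-a,a}(X)$ places $\mathrm{Res}(\Omega/p)$ in $H^{2,0}(X)$; and (iii) nonvanishing follows by restricting to the torus and identifying the tube-integral residue with the ordinary Poincar\'e residue of a meromorphic top form with a simple pole along a smooth hypersurface. Your parenthetical justification that $X\cap T\neq\emptyset$ could be sharpened (the cleanest reason is that a K3 surface is irreducible and no single torus-invariant divisor $D_i$ is a K3, so $X$ cannot coincide with any $D_i$), but the conclusion stands. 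The identification of the global residue with the affine Poincar\'e residue on the torus chart is indeed the substantive point, and it is precisely what \cite{BC} and \cite{Mavlyutov} establish; your decision to spell it out explicitly rather than appeal to their structure theorem is a reasonable expository choice.
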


We may use rational forms and the residue map to relate $H^{n-1}(X)$ to certain quotient rings.

\begin{definition}\cite{BC}
Let $p \in S_\beta$.  Then the \emph{Jacobian ideal} $J(p)$ is the ideal of $S$ generated by the partial derivatives $\partial p/\partial z_i$, $i = 1 \dots q$, and the \emph{Jacobian ring} $R(p)$ is the quotient ring $S/J(p)$.  The Jacobian ring inherits a grading from $S$.
\end{definition}

\begin{proposition}\cite[\S 3]{Mavlyutov}
If $X$ is a quasismooth, semiample hypersurface defined by a polynomial $p$, then the residue map induces a well-defined map of rings 
\[\mathrm{Res}_J: R(p) \to H^{n-1}(X)\]
satisfying $\mathrm{Res}_J([A]_{R(p)}) = \mathrm{Res}(\omega_A)$.
\end{proposition}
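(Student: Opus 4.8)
The plan is to follow the toric Griffiths-Dwork formalism of \cite{BC}, as extended to the semiample setting in \cite{Mavlyutov}. Since the map is defined on representatives by $\mathrm{Res}_J([A]_{R(p)}):=\mathrm{Res}(\omega_A)$, and since (from the material above) the residue map $\mathrm{Res}\colon H^n(V_\Sigma-X)\to H^{n-1}(X)$ is already well defined on cohomology and $\mathrm{Res}(\omega_A)$ lands in $H^{n-1-a,a}(X)$ for $A\in S_{(a+1)\beta-\beta_0}$, there are two things to verify: (i) the assignment descends to $R(p)=S/J(p)$, i.e.\ $A\in J(p)\cap S_{(a+1)\beta-\beta_0}$ forces $\mathrm{Res}(\omega_A)=0$; and (ii) the resulting map respects multiplication. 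Quasismoothness enters because it guarantees that the partial derivatives $\partial p/\partial z_i$ have no common zero on $X$, so that the reduction-of-pole-order identities below make sense near $X$; semiampleness is what makes the residue theory of \cite{Mavlyutov} applicable.

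For (i) I would first establish a toric reduction-of-pole-order identity at the level of rational forms. Write $A=\sum_{i=1}^q B_i\,\partial p/\partial z_i$ with $B_i\in S$ homogeneous of the degrees forced by $\deg A$. One then constructs an explicit rational $(n-1)$-form $\Psi_B$ on $V_\Sigma-X$, with a pole of order $a$ along $X$: it is the toric analogue of the classical form $p^{-a}\sum_{i<j}(-1)^{i+j}(B_iz_j-B_jz_i)\,dz_0\wedge\cdots\wedge\widehat{dz_i}\wedge\cdots\wedge\widehat{dz_j}\wedge\cdots\wedge dz_n$ used on $\mathbb{P}^n$, now assembled from the lattice determinants $\det v_I$ and the monomials $\hat z_I$ appearing in $\Omega$. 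Computing $d\Psi_B$ yields $\omega_A=\tfrac1a\,\omega_{A'}+d\Psi_B$ for an explicit $A'\in S_{a\beta-\beta_0}$ built from the $\partial B_i/\partial z_j$. Hence $[\omega_A]=\tfrac1a[\omega_{A'}]$ in $H^n(V_\Sigma-X)$, so $\mathrm{Res}(\omega_A)=\tfrac1a\mathrm{Res}(\omega_{A'})$ in $H^{n-1}(X)$. Now $\mathrm{Res}(\omega_A)$ a priori sits in the Hodge-filtration level $F^{n-a-1}H^{n-1}(X)$ and contributes to the graded piece $F^{n-a-1}/F^{n-a}\cong H^{n-1-a,a}(X)$, whereas $\omega_{A'}$ has a pole of order only $a$, so $\mathrm{Res}(\omega_{A'})\in F^{n-a}H^{n-1}(X)$; therefore $\mathrm{Res}(\omega_A)$ already lies in $F^{n-a}$ and so vanishes in $F^{n-a-1}/F^{n-a}=H^{n-1-a,a}(X)$. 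This gives $\mathrm{Res}(\omega_A)=0$, as desired. (At pole order one this is consistent with Lemma~\ref{P:Mav}, which identifies $\mathrm{Res}(\Omega/p)$ with a generator of $H^{n-1,0}(X)$.)

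For (ii), with (i) in hand multiplicativity is formal: if $A-A'$ and $B-B'$ lie in $J(p)$, then $AB-A'B'=A(B-B')+(A-A')B'\in J(p)$, so $\mathrm{Res}(\omega_{AB})=\mathrm{Res}(\omega_{A'B'})$ by (i). Thus multiplication on $S$ descends to a well-defined product $\mathrm{Res}(\omega_A)\cdot\mathrm{Res}(\omega_B):=\mathrm{Res}(\omega_{AB})$ on $\bigoplus_a H^{n-1-a,a}(X)$, making it a graded ring and $\mathrm{Res}_J$ a graded ring homomorphism; products whose Hodge degree would be negative vanish, matching the graded pieces of $R(p)$ that $\mathrm{Res}_J$ annihilates, and the class $\mathrm{Res}(\Omega/p)$ of Lemma~\ref{P:Mav} is the identity.

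The main obstacle is the explicit identity in (i): on $\mathbb{P}^n$ the coordinates all have the same degree and there is a single Euler relation, whereas here the $z_i$ carry different degrees in the Chow group, $\Omega$ is a genuine sum over $n$-element subsets $I$ with lattice coefficients $\det v_I\,\hat z_I$, and the relations $\sum_i\langle m,v_i\rangle\,z_i\,\partial p/\partial z_i$ that pin down the choice of the $B_i$ are indexed by $m\in M$. Writing down $\Psi_B$, verifying $d\Psi_B=\omega_A-\tfrac1a\omega_{A'}$ compatibly with the grading, and checking independence of the chosen maximal projective subdivision and of the basis of $M$ used to define $\Omega$, is where the care lies; this is precisely the computation of \cite{BC} and \cite{Mavlyutov}, which we invoke.
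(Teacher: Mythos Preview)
The paper does not give its own proof of this proposition; it is simply quoted from \cite[\S 3]{Mavlyutov}. Your approach via an explicit reduction-of-pole-order identity is exactly the Batyrev--Cox/Mavlyutov argument, and in fact the paper records precisely the identity you need a little later (the lemma from \cite[Lemma 10.7]{BC} and Equation~\ref{E:reducePoleOrderToric}). So the strategy is the right one.

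There is, however, a genuine gap at the end of your step (i). From $A=\sum_i B_i\,\partial p/\partial z_i$ the reduction identity gives $[\omega_A]=\tfrac{1}{a}[\omega_{A'}]$ with $A'=\sum_i \partial B_i/\partial z_i\in S_{a\beta-\beta_0}$, hence $\mathrm{Res}(\omega_A)\in F^{n-a}H^{n-1}(X)$ and so has zero image in the graded piece $H^{n-1-a,a}(X)$. You then write ``This gives $\mathrm{Res}(\omega_A)=0$'', but that does not follow: $A'$ has no reason to lie in $J(p)$, so the argument does not iterate, and $\mathrm{Res}(\omega_{A'})$ is typically nonzero in $H^{n-1}(X)$. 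What \cite{BC} and \cite{Mavlyutov} actually prove is that the residue descends to $R(p)$ only after passing to the Hodge graded pieces, i.e.\ one gets well-defined maps $R(p)_{(a+1)\beta-\beta_0}\to H^{n-1-a,a}(X)$. The paper's sentence ``$\mathrm{Res}(\omega_A)$ lies in $H^{n-1-a,a}(X)$'' should be read in that graded sense (residue lands in $F^{n-1-a}$, and one then projects), and your argument is correct for that formulation; but as written your conclusion overclaims.

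A smaller point on (ii): defining $\mathrm{Res}(\omega_A)\cdot\mathrm{Res}(\omega_B):=\mathrm{Res}(\omega_{AB})$ makes $\mathrm{Res}_J$ a ring homomorphism tautologically, but this is a definition of a product on the image rather than compatibility with any intrinsic ring structure on $H^{n-1}(X)$. That is fine for the purposes the paper needs, but you should be explicit that ``map of rings'' here means exactly this transported structure and nothing more.
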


If $V_\Sigma$ is isomorphic to $\mathbb{P}^n$ or a weighted projective space, then the map $\mathrm{Res}_J$ is injective.  (See \cite[\S 11]{BC}.)  One may obtain injective maps for more general ambient spaces by working with a different quotient ring; however, these results only apply when the hypersurface $X$ is regular.

\begin{definition}\cite{BC}
Let $p \in S_\beta$.  Then the ideal $J_1(p)$ is the ideal quotient
\[\langle z_1 \partial p/\partial z_1, \dots, z_q \partial p/\partial z_q \rangle : z_1 \cdots z_q. \]
The ring $R_1(p)$ is the quotient ring $S/J_1(p)$; this ring inherits a grading from $S$.
\end{definition}

\begin{theorem}\cite[Theorem 4.4]{Mavlyutov}
If $X$ is a regular, semiample hypersurface defined by a polynomial $p$, then the residue map induces a well-defined, injective map of rings 
\[\mathrm{Res}_{J_1}: R_1(p) \to H^{n-1}(X)\]
satisfying $\mathrm{Res}_{J_1}([A]_{R_1(p)}) = \mathrm{Res}(\omega_A)$.
\end{theorem}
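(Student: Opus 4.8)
The plan is to obtain the theorem from the preceding proposition — which already provides the well-defined ring map $\mathrm{Res}_J\colon R(p)\to H^{n-1}(X)$, $[A]_{R(p)}\mapsto\mathrm{Res}(\omega_A)$, for every quasismooth semiample $X$ — by showing that $\mathrm{Res}_J$ factors through $R_1(p)$ and that the factorization is injective. The starting point is the inclusion $J(p)\subseteq J_1(p)$: for each $j$,
\[
z_1\cdots z_q\cdot\frac{\partial p}{\partial z_j}=\Bigl(\prod_{k\neq j}z_k\Bigr)\Bigl(z_j\frac{\partial p}{\partial z_j}\Bigr)\in\langle z_1\,\partial p/\partial z_1,\dots,z_q\,\partial p/\partial z_q\rangle ,
\]
so $\partial p/\partial z_j\in\langle z_1\,\partial p/\partial z_1,\dots,z_q\,\partial p/\partial z_q\rangle:z_1\cdots z_q=J_1(p)$. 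Hence $J_1(p)/J(p)$ is an ideal of $R(p)$ with quotient $R_1(p)$, and $\mathrm{Res}_J$ descends to a well-defined map $\mathrm{Res}_{J_1}\colon R_1(p)\to H^{n-1}(X)$ satisfying $\mathrm{Res}_{J_1}([A]_{R_1(p)})=\mathrm{Res}(\omega_A)$ exactly when $J_1(p)/J(p)\subseteq\ker\mathrm{Res}_J$; this descended map is then injective exactly when moreover $\ker\mathrm{Res}_J\subseteq J_1(p)/J(p)$, and it is automatically a ring map, since it is induced by the ring map $\mathrm{Res}_J$ along the surjective ring map $R(p)\twoheadrightarrow R_1(p)$. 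Thus the theorem is equivalent to the assertion
\[
\mathrm{Res}(\omega_A)=0\iff A\in J_1(p)\qquad\bigl(A\in S_{(a+1)\beta-\beta_0}\ \text{homogeneous}\bigr),
\]
and I would prove the two implications in turn, using the following observation for the first: since each $z_j\,\partial p/\partial z_j$ lies in $J(p)$, we have $z_1\cdots z_q\cdot J_1(p)\subseteq\langle z_1\,\partial p/\partial z_1,\dots,z_q\,\partial p/\partial z_q\rangle\subseteq J(p)$.

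For ``$\Leftarrow$'', let $A\in J_1(p)$ (and, as in the applications, $\beta=\beta_0$, so that clearing a factor of $z_1\cdots z_q$ stays within the range of admissible degrees). By the observation above, $z_1\cdots z_q\,A\in J(p)$, so the preceding proposition already gives that the residue of the associated rational form vanishes; I would then transfer this vanishing down to $\omega_A$ itself using the toric integration-by-parts identities of \cite[\S3]{Mavlyutov} (which refine those of \cite{BC}): contracting $\Omega$ against the logarithmic torus vector fields $z_j\,\partial/\partial z_j$ and applying $d$ relates forms of consecutive pole orders on $V_\Sigma-X$ modulo exact forms, and iterating these relations strips off the factor $z_1\cdots z_q$. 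Executing that last step is exactly where the hypothesis that $X$ be \emph{regular}, rather than merely quasismooth, is needed — it is the geometric content of the ideal quotient by $z_1\cdots z_q$ in the definition of $J_1(p)$ — and it yields $\mathrm{Res}(\omega_A)=0$.

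For ``$\Rightarrow$'', the crux, I would use the structure of $H^n(V_\Sigma-X)$. The rational forms $\omega_A$ span $H^n(V_\Sigma-X)$ modulo the image of $H^n(V_\Sigma)$, and the residue (Gysin) exact sequence identifies that image with $\ker\bigl(\mathrm{Res}\colon H^n(V_\Sigma-X)\to H^{n-1}(X)\bigr)$; in the situations of interest here $n=3$ and $H^3(V_\Sigma)=0$, so $\mathrm{Res}$ is outright injective, and in general one checks separately that the span of the $\omega_A$ meets $\mathrm{im}\,H^n(V_\Sigma)$ only in $0$. It therefore suffices to show that the module of relations among the $\omega_A$ — the kernel of $A\mapsto\omega_A$ on $\bigoplus_a S_{(a+1)\beta-\beta_0}$ — is exactly $J_1(p)$. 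I would establish this by resolving the algebraic de Rham complex computing $H^\bullet(V_\Sigma-X)$ by a toric Koszul-type complex built from the sections $z_1\,\partial p/\partial z_1,\dots,z_q\,\partial p/\partial z_q$ of $\mathcal{O}_{V_\Sigma}(\beta)$, computing its hypercohomology, and reading off which combinations $\omega_A$ are cohomologically trivial: regularity is precisely the hypothesis under which the relevant portion of this complex is exact, and tracking the relations through it produces the ideal quotient $\langle z_1\,\partial p/\partial z_1,\dots,z_q\,\partial p/\partial z_q\rangle:z_1\cdots z_q=J_1(p)$ rather than the naive ideal $\langle z_1\,\partial p/\partial z_1,\dots,z_q\,\partial p/\partial z_q\rangle$. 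The pole-order-one end of the computation is pinned down by Lemma~\ref{P:Mav}, which guarantees $\mathrm{Res}(\Omega/p)\neq 0$. Combining the two implications yields $\ker\mathrm{Res}_J=J_1(p)/J(p)$, hence the theorem.

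The step I expect to be the main obstacle is the structural one just described: proving exactness of the toric Koszul-type complex on the $z_j\,\partial p/\partial z_j$ under the regularity hypothesis, and verifying that the module of relations among the forms $\omega_A$ is the \emph{colon} ideal $J_1(p)$ rather than $\langle z_1\,\partial p/\partial z_1,\dots,z_q\,\partial p/\partial z_q\rangle$, the correction by $z_1\cdots z_q$ being forced by the contribution of the torus-invariant boundary divisors. This sheaf-cohomological analysis is the genuine content of \cite[Theorem 4.4]{Mavlyutov} (with antecedents in \cite{BC}); once it is available, the remaining steps above are formal, and in this paper we simply invoke it.
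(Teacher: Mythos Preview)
The paper does not prove this theorem at all: it is stated with the attribution \cite[Theorem 4.4]{Mavlyutov} and invoked as a black box, with no argument supplied. There is therefore no ``paper's own proof'' to compare your proposal against. You recognize this yourself in your final paragraph, where you note that ``in this paper we simply invoke it.''

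Your outline is a plausible sketch of the shape of Mavlyutov's argument --- reduce to identifying $\ker\mathrm{Res}_J$ with $J_1(p)/J(p)$, handle the two inclusions separately, and appeal to a Koszul-type resolution under the regularity hypothesis --- but several steps are genuinely only gestures (the ``stripping off $z_1\cdots z_q$'' manoeuvre, and the exactness of the toric Koszul complex). That is fine as an annotated citation; just be aware that what you have written is a roadmap, not a proof, and that the present paper makes no attempt to supply one.
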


\section{Three symmetric families of K3 surfaces}

\subsection{Symplectic group actions on K3 surfaces}

Let $X$ be a K3 surface and let $g$ be an automorphism of $X$.  We say that $g$ \emph{acts symplectically} if $g^*(\omega) = \omega$, where $\omega$ is the unique holomorphic two-form on $X$.  If $G$ is a finite group of automorphisms of $X$, we say $G$ \emph{acts symplectically} on $X$ if every element of $G$ acts symplectically.

The cup product induces a bilinear form $\langle \, , \rangle$ on $H^2(X,\mathbb{Z}) \cong H \oplus H \oplus H \oplus E_8 \oplus E_8$.  (We take $E_8$ to be negative definite.)  Using this form, we define $S_G = (H^2(X,\mathbb{Z})^G)^\perp$.  The Picard group of $X$, $\mathrm{Pic}(X)$,  consists of $H^{1,1}(X) \cap H^2(X,\mathbb{Z})$; the group $\mathcal{T}(X) \subseteq H^2(X,\mathbb{Z})$ of transcendental cycles is defined as $(\mathrm{Pic}(X))^\perp$.  Nikulin showed that the groups $\mathrm{Pic}(X)$ and $S_G$ are related:

\begin{proposition}\label{P:Nik} \cite[Lemma 4.2]{Nikulin}
$S_G \subseteq \mathrm{Pic}(X)$ and $\mathcal{T}(X) \subseteq H^2(X,\mathbb{Z})^G$.  The lattice $S_G$ is nondegenerate and negative definite.
\end{proposition}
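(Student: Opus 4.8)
The plan is to work inside the Hodge decomposition $H^2(X,\mathbb{C}) = H^{2,0}(X) \oplus H^{1,1}(X) \oplus H^{0,2}(X)$ and exploit the fact that a symplectic action fixes $\omega$. Extending the $G$-action $\mathbb{C}$-linearly, the hypothesis $g^*\omega = \omega$ for all $g \in G$ gives $H^{2,0}(X) \subseteq H^2(X,\mathbb{C})^G$; since the action on $H^2(X,\mathbb{Z})$ is defined over $\mathbb{Z}$ it commutes with complex conjugation, so $H^{0,2}(X) = \overline{H^{2,0}(X)}$ also lies in $H^2(X,\mathbb{C})^G$. The cup product on the unimodular lattice $H^2(X,\mathbb{Z})$ is nondegenerate, so $S_G \otimes \mathbb{C}$ is precisely the orthogonal complement of $H^2(X,\mathbb{C})^G$ inside $H^2(X,\mathbb{C})$, and hence $S_G \otimes \mathbb{C} \subseteq (H^{2,0}(X) \oplus H^{0,2}(X))^\perp$. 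Because $H^{2,0}$ and $H^{0,2}$ are isotropic lines that pair perfectly with one another and annihilate $H^{1,1}$ (here one uses that $X$ is a surface, so $\Omega^3_X = 0$), this orthogonal complement is exactly $H^{1,1}(X)$. Thus $S_G \otimes \mathbb{C} \subseteq H^{1,1}(X)$, and intersecting with the integral lattice and applying the Lefschetz $(1,1)$-theorem for K3 surfaces, under which $\mathrm{Pic}(X) = H^{1,1}(X) \cap H^2(X,\mathbb{Z})$, yields $S_G \subseteq \mathrm{Pic}(X)$.

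For the second inclusion I would pass to orthogonal complements. The invariant sublattice $H^2(X,\mathbb{Z})^G$ is primitive: if $nv$ is $G$-invariant for some nonzero integer $n$, then $n(g^*v - v) = 0$ in the torsion-free group $H^2(X,\mathbb{Z})$, so $v$ is already invariant. Since $H^2(X,\mathbb{Z})$ is unimodular, the double orthogonal complement of a primitive sublattice recovers the sublattice, so $(S_G)^\perp = H^2(X,\mathbb{Z})^G$. Combining this with $S_G \subseteq \mathrm{Pic}(X)$ gives $\mathcal{T}(X) = (\mathrm{Pic}(X))^\perp \subseteq (S_G)^\perp = H^2(X,\mathbb{Z})^G$.

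It remains to establish that $S_G$ is nondegenerate and negative definite. For nondegeneracy I would use the averaging projection $\pi = \tfrac{1}{|G|}\sum_{g \in G} g^*$ of $H^2(X,\mathbb{Q})$ onto $H^2(X,\mathbb{Q})^G$, which is self-adjoint for the cup product; if $v \in H^2(X,\mathbb{Q})^G$ pairs trivially with all of $H^2(X,\mathbb{Q})^G$, then $\langle v, w\rangle = \langle \pi v, w\rangle = \langle v, \pi w\rangle = 0$ for every $w \in H^2(X,\mathbb{Q})$, forcing $v = 0$ by nondegeneracy. Hence $H^2(X,\mathbb{Q}) = H^2(X,\mathbb{Q})^G \perp (S_G \otimes \mathbb{Q})$ is an orthogonal direct sum on which the form restricts nondegenerately to each summand, so $S_G$ is nondegenerate. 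For the signature, I would average a Kähler class of $X$ over $G$ — the Kähler cone is convex and $G$-stable — to obtain a $G$-invariant real $(1,1)$-class $\kappa$ with $\langle\kappa,\kappa\rangle > 0$. Writing $\omega = \alpha + i\beta$ with $\alpha,\beta$ real, the relations $\langle\omega,\omega\rangle = 0$ and $\langle\omega,\overline\omega\rangle > 0$ show that $\mathrm{span}_{\mathbb{R}}(\alpha,\beta)$ is a positive-definite plane, and it is orthogonal to $\kappa$ because $H^{1,1}$ annihilates $H^{2,0}\oplus H^{0,2}$. Thus $H^2(X,\mathbb{R})^G$ contains the positive-definite three-dimensional subspace $\mathrm{span}_{\mathbb{R}}(\alpha,\beta,\kappa)$; since $H^2(X,\mathbb{R})$ has signature $(3,19)$ this is a maximal positive-definite subspace, so the orthogonal summand $S_G \otimes \mathbb{R}$ contains no positive vector, and being nondegenerate it is negative definite.

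The argument is largely bookkeeping; the only points that genuinely use the hypotheses rather than formal manipulation are the passage between the integral lattice $S_G$ and its rational or real scalar extensions — which relies on unimodularity of $H^2(X,\mathbb{Z})$ and on primitivity of the invariant sublattice — and the construction of a $G$-invariant Kähler class, where finiteness of $G$ is essential. I do not expect any step to present a serious obstacle.
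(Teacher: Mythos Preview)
Your argument is correct and follows the standard route (essentially Nikulin's): use the symplectic hypothesis to place $H^{2,0}\oplus H^{0,2}$ inside the invariants, deduce $S_G\subseteq H^{1,1}\cap H^2(X,\mathbb{Z})=\mathrm{Pic}(X)$ via Lefschetz $(1,1)$, pass to orthogonal complements for $\mathcal{T}(X)$, and use an invariant K\"ahler class together with the signature $(3,19)$ for negative definiteness.  The paper itself gives no proof of this proposition; it simply quotes the result from \cite[Lemma~4.2]{Nikulin}, so there is nothing to compare against beyond noting that your reconstruction matches the classical argument.

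One small matter of exposition: in the first paragraph you assert that $S_G\otimes\mathbb{C}$ is \emph{precisely} the orthogonal complement of $H^2(X,\mathbb{C})^G$, but at that point you have not yet established nondegeneracy on the invariant subspace, which is what guarantees the dimensions match.  Since you only use the inclusion $S_G\otimes\mathbb{C}\subseteq (H^2(X,\mathbb{C})^G)^\perp$ in the next step, the logic is unaffected; you could simply weaken ``precisely the orthogonal complement'' to ``contained in the orthogonal complement'' there, or postpone the equality claim until after the averaging--projection argument.  Likewise, your appeal to unimodularity for $(S_G)^\perp=H^2(X,\mathbb{Z})^G$ is slightly stronger than needed---nondegeneracy over $\mathbb{Q}$ together with primitivity already gives $(M^\perp)^\perp=M$---but this is harmless.
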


The rank of the lattice $S_G$ depends only on the group $G$.  \cite[Table 2]{Xiao} lists the rank of $S_G$ for each group $G$ which admits a symplectic action on a K3 surface; a discussion of methods for computing lattice invariants of $S_G$ may be found in \cite{CommAlg}.

\begin{lemma}\cite[Example 2.1]{CommAlg}\label{L:S4}
Let $X$ be a K3 surface which admits a symplectic action by the permutation group $G = \mathcal{S}_4$.  Then $\mathrm{Pic}(X)$ admits a primitive sublattice $S_G$ which has rank $17$ and discriminant $d(S_G) = -2^6 \cdot 3^2$.
\end{lemma}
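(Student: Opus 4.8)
The plan is to identify the rank and discriminant of $S_G$ by combining the abstract theory of symplectic actions on K3 surfaces with an explicit computation for $G = \mathcal{S}_4$. First, I would invoke the general machinery: by Proposition~\ref{P:Nik}, $S_G$ is a negative definite sublattice of $\mathrm{Pic}(X)$, and by the remarks following that proposition, its rank depends only on $G$. From \cite[Table 2]{Xiao}, the rank of $S_G$ for $G = \mathcal{S}_4$ is $17$; this part requires only a citation. The substantive content is then the claim that $S_G$ can be taken to be a \emph{primitive} sublattice with discriminant $-2^6 \cdot 3^2$.

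For primitivity, the natural approach is to replace $S_G$ by its primitive closure $\overline{S_G}$ in $\mathrm{Pic}(X)$ (equivalently in $H^2(X,\mathbb{Z})$), which is automatically primitive, still negative definite, and still of rank $17$. One must check that the relevant discriminant is unchanged, or rather compute the discriminant of the primitive sublattice directly. Here I would appeal to the procedure of \cite{CommAlg}: the third author's method computes the lattice invariants — signature and discriminant form — of the (primitive) lattice $S_G$ associated to any group in \cite[Table 2]{Xiao}. Specializing that procedure to $\mathcal{S}_4$ yields a rank $17$ negative definite lattice; one then reads off the discriminant. The computation of the discriminant form for $\mathcal{S}_4$ proceeds by decomposing the $\mathcal{S}_4$-representation on $H^2(X,\mathbb{Z})\otimes\mathbb{Q}$, identifying the coinvariant sublattice, and tracking the contributions of the fixed points of the action (the number and type of singular points on the quotient $X/\mathcal{S}_4$, which feed into the Nikulin-style lattice-gluing computation). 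The factor $2^6 \cdot 3^2$ should emerge from the orders of stabilizer subgroups of $\mathcal{S}_4$ — the $2$-part from the Klein four-subgroups and transpositions, the $3$-part from the $3$-cycles — consistent with the structure of $\mathcal{S}_4$.

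The main obstacle I anticipate is the discriminant computation itself: while the rank is a pure table lookup, pinning down the discriminant as exactly $-2^6\cdot 3^2$ (rather than merely up to squares, or up to the ambiguity of passing to a finite-index overlattice) requires carefully executing the lattice-theoretic algorithm of \cite{CommAlg} for $\mathcal{S}_4$, including correctly enumerating the fixed loci of all cyclic subgroups and assembling the discriminant form from local pieces at the primes $2$ and $3$. Since \cite[Example 2.1]{CommAlg} is cited as the source, the cleanest proof is simply to quote that example: the rank $17$ and discriminant $-2^6\cdot 3^2$ are precisely its output. Thus the proof reduces to: (i) apply Proposition~\ref{P:Nik} to get that $S_G$ is negative definite and contained in $\mathrm{Pic}(X)$; (ii) cite \cite[Table 2]{Xiao} for $\operatorname{rank} S_G = 17$; (iii) cite \cite[Example 2.1]{CommAlg} for the primitivity and the discriminant value. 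I would present it in that order, with the understanding that the real mathematical work lives in \cite{CommAlg} and is being imported wholesale.
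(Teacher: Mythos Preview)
Your proposal is correct and matches the paper's treatment: the paper provides no proof of this lemma at all, simply stating it with the citation \cite[Example 2.1]{CommAlg}. Your plan to reduce everything to that citation (supplemented by Proposition~\ref{P:Nik} and \cite[Table 2]{Xiao} for context) is exactly the intended reading, and your acknowledgment that ``the real mathematical work lives in \cite{CommAlg} and is being imported wholesale'' is precisely the point.
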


\subsection{An $\mathcal{S}_4$ symmetry of polytopes and hypersurfaces}\label{S:toricexamples}

Let $\diamond$ be a reflexive polytope in a lattice $N \cong \mathbb{Z}^3$, and let $\Sigma$ be a simplicial refinement of the fan over the faces of $\diamond$.  Demazure and Cox showed that the automorphism group $A$ of the toric variety $V_\Sigma$ is generated by the big torus $T \cong (\mathbb{C}^*)^3$, symmetries of the fan $\Sigma$ induced by lattice automorphisms, and one-parameter families derived from the ``roots'' of $V_\Sigma$ (see \cite{CoxKatz}).  We are interested in finite subgroups of $A$ which act symplectically on K3 hypersurfaces $X$ in $V_\Sigma$. 

Let us consider the automorphisms of $V_\Sigma$ induced by symmetries of the fan $\Sigma$.  Since $\Sigma$ is a refinement of the fan $R$ consisting of cones over the faces of $\diamond$, the group of symmetries of $\Sigma$ must be a subgroup $H'$ of the group $H$ of symmetries of $\diamond$ (viewed as a lattice polytope).  We will identify a family $\mathcal{F}_\diamond$ of K3 surfaces in $V_\Sigma$ on which $H'$ acts by automorphisms, and then compute the induced action of $G$ on the $(2,0)$ form of each member of the family.  

Let $h \in H'$, and let $X$ be a K3 surface in $V_\Sigma$ defined by a polynomial $p$ in global homogeneous coordinates.  Then $h$ maps lattice points of $\diamond$ to lattice points of $\diamond$, so we may view $h$ as a permutation of the global homogeneous coordinates $z_i$: $h$ is an automorphism of $X$ if $p \circ h = p$.  Alternatively, since $H$ is the automorphism group of both $\diamond$ and its polar dual polytope $\diamond^\circ$, we may view $h$ as an automorphism of $\diamond^\circ$: from this vantage point, we see that $h$ acts by a permutation of the coefficients $c_x$ of $p$, where each coefficient $c_x$ corresponds to a point $x \in \diamond^\circ$.  Thus, if $h$ is to preserve $X$, we must have $c_x = c_y$ whenever $h(x) = y$.  We may define a family of K3 surfaces fixed by $H'$ by requiring that $c_x = c_y$ for any two lattice points $x,y \in \diamond^\circ$ which lie in the same orbit of $H'$:

\begin{proposition}\label{P:FanSymmetryFamily}
Let $\mathcal{F}_\diamond$ be the family of K3 surfaces in $V_\Sigma$ defined by the following family of polynomials in global homogeneous coordinates:
\[ p = (\sum_{Q \in \mathscr{O}} c_{Q} \sum_{x \in Q} \prod_{k=1}^q z_k^{\langle v_k, x \rangle + 1}) + \prod_{k=1}^q z_k,\]
where $\mathscr{O}$ is the set of orbits of nonzero lattice points in $\diamond^\circ$ under the action of $H'$.  Then $H'$ acts by automorphisms on each K3 surface $X$ in $\mathcal{F}_\diamond$.
\end{proposition}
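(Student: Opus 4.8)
The plan is to show directly that for each $h \in H'$ the automorphism $\tilde h$ of $V_\Sigma$ induced by $h$ satisfies $\tilde h^{*}(p) = p$, so that $\tilde h$ preserves the hypersurface $X = \{p = 0\}$ and restricts to an automorphism of it; as this holds for every element of $H'$, the group $H'$ acts by automorphisms on each member of $\mathcal{F}_\diamond$. As recalled in the paragraph preceding the statement, $h$ is a lattice automorphism of $N$ permuting the nonzero lattice points $v_1, \dots, v_q$ of $\diamond$ and carrying cones of $\Sigma$ to cones of $\Sigma$, so it induces $\tilde h$, which in global homogeneous coordinates is the plain permutation of the $z_k$ determined by $h(v_k) = v_{\sigma(k)}$. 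Dually, $h$ acts on $M$ by the automorphism $h_*$ characterized by $\langle h(v), h_*(x) \rangle = \langle v, x \rangle$; because $h$ preserves $\diamond$, the map $h_*$ preserves $\diamond^\circ$ and permutes its lattice points, $h \mapsto h_*$ is a group homomorphism, and $h_*(0) = 0$, so the set $\mathscr{O}$ of $H'$-orbits on the nonzero lattice points of $\diamond^\circ$ is well-defined.

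The heart of the argument is a monomial bookkeeping. For $x \in \diamond^\circ \cap M$ write $z^{a(x)} := \prod_{k=1}^q z_k^{\langle v_k, x\rangle + 1}$ for the associated anticanonical monomial. Reindexing the product by $\sigma$ and then invoking the defining identity of $h_*$ (with $x$ replaced by $h_*^{-1}(x)$, giving $\langle h(v_k), x\rangle = \langle v_k, h_*^{-1}(x)\rangle$), one computes
\[
\tilde h^{*}\!\big(z^{a(x)}\big) \;=\; \prod_{k=1}^q z_k^{\langle h(v_k),\, x\rangle + 1} \;=\; z^{a(h_*^{-1}(x))}.
\]
Thus $\tilde h^{*}$ permutes the anticanonical monomials exactly as $h_*^{-1}$ permutes the lattice points of $\diamond^\circ$, and in particular fixes $\prod_{k=1}^q z_k = z^{a(0)}$. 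Summing over the orbits and using that each $Q \in \mathscr{O}$ is invariant under $h_*^{-1}$ (since $h^{-1}\in H'$ and orbits are invariant under the whole group), we obtain
\begin{align*}
\tilde h^{*}(p) &= \sum_{Q \in \mathscr{O}} c_Q \sum_{x \in Q} z^{a(h_*^{-1}(x))} + \prod_{k=1}^q z_k \\
&= \sum_{Q \in \mathscr{O}} c_Q \sum_{y \in Q} z^{a(y)} + \prod_{k=1}^q z_k \;=\; p,
\end{align*}
where the decisive point is that $c_x$ depends only on the orbit of $x$, so the reindexed sum collapses back to $p$. Hence $\tilde h$ preserves $X$, completing the proof.

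The step that I expect to require the most care is the passage between the coordinate permutation $\sigma$ on $V_\Sigma$ and the dual action on $\diamond^\circ$: one must track the pairing $\langle v_k, x\rangle$ and the direction of the inverse carefully enough to see that $z^{a(x)}$ is sent to $z^{a(w)}$ with $w$ in the same $H'$-orbit as $x$ (whether $w$ comes out as $h_*(x)$ or $h_*^{-1}(x)$ is a matter of convention and is immaterial, precisely because the orbits are invariant under the whole group), and one must check that $\tilde h$ is genuinely a permutation of the homogeneous coordinates with no extra torus rescaling — which is exactly the point, established before the statement, that $h$ permutes the primitive generators $v_k$ and sends $\Sigma$ to itself. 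Everything else — that $h \mapsto h_*$ is a homomorphism preserving $\diamond^\circ$, that $\prod_k z_k$ is the monomial of the $H'$-fixed point $0$, and that constancy of $c_x$ on orbits forces the collapse of the sum — is routine.
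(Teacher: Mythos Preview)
Your proof is correct and follows essentially the same approach as the paper: the paper's argument is the discussion immediately preceding the proposition, which observes that $h$ acts as a permutation of the homogeneous coordinates $z_i$ and, dually, as a permutation of the coefficients $c_x$ indexed by lattice points of $\diamond^\circ$, so that $p\circ h = p$ exactly when $c_x$ is constant on $H'$-orbits. Your write-up simply makes this explicit by tracking the monomial $z^{a(x)}$ under $\tilde h^{*}$ via the pairing identity $\langle h(v_k),x\rangle = \langle v_k, h_*^{-1}(x)\rangle$, which is a careful expansion of the same idea rather than a different route.
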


\begin{proposition}
Let $X$ be a quasismooth K3 surface in the family $\mathcal{F}_\diamond$, and let $h \in H'  \subset \mathbf{GL}(3,\mathbb{Z})$.  Then $h^*(\omega) = (\mathrm{det}~h)\omega$.
\end{proposition}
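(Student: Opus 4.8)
The plan is to compute $h^{*}\omega$ from the explicit description of the holomorphic two-form in Lemma~\ref{P:Mav}: since $X$ is quasismooth, $\omega = \mathrm{Res}(\Omega/p)$, where $\Omega = \sum_{|I|=n}\det v_{I}\,\hat z_{I}\,dz_{I}$ (here $n=3$), and this class spans $H^{2,0}(X)$, so it is enough to prove the identity for this representative. Since $h \in H'$ permutes the rays of $\Sigma$, it permutes the homogeneous coordinates $z_{k}$, and the construction of $\mathcal{F}_\diamond$ in Proposition~\ref{P:FanSymmetryFamily} — the coefficients $c_{Q}$ are constant on $H'$-orbits in $\diamond^{\circ}$, and $\prod_{k}z_{k}$ is permutation-invariant — guarantees that $h^{*}p = p$, so $h$ restricts to an automorphism of $X$. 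The argument then has two parts: (i) $h^{*}$ commutes with $\mathrm{Res}$, and (ii) $h^{*}\Omega = (\det h)\,\Omega$.

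For (i), I would use the definition of $\mathrm{Res}$ by integration over tubes. As an automorphism of $V_\Sigma$ preserving $X$, the map $h$ carries an $(n-1)$-cycle $\gamma$ on $X$ to a cycle $h(\gamma)$ and carries the tube $T(\gamma) \subset V_\Sigma - X$ to $T(h(\gamma))$, so the change-of-variables formula gives, for any $\eta$ representing a class in $H^{n}(V_\Sigma - X)$,
\[
\int_{\gamma} h^{*}\mathrm{Res}(\eta) \;=\; \int_{h(\gamma)}\mathrm{Res}(\eta) \;=\; \int_{T(h(\gamma))}\eta \;=\; \int_{T(\gamma)} h^{*}\eta \;=\; \int_{\gamma}\mathrm{Res}(h^{*}\eta).
\]
As this holds for every $\gamma$, we conclude $h^{*}\mathrm{Res}(\eta) = \mathrm{Res}(h^{*}\eta)$; taking $\eta = \Omega/p$ and using $h^{*}p = p$ reduces the proposition to part (ii).

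For (ii), the cleanest route is to restrict $\Omega$ to the dense torus $T \subset V_\Sigma$. If $t_{1},\dots,t_{n}$ are torus coordinates dual to the chosen basis $m_{1},\dots,m_{n}$ of $M$, then $\Omega|_{T}$ is a nonzero constant multiple of the invariant form $\frac{dt_{1}}{t_{1}}\wedge\cdots\wedge\frac{dt_{n}}{t_{n}}$ (this is the content of the remark in the excerpt that $\Omega$ is the usual form when $V_\Sigma = \mathbb{P}^{n}$), and under $d\log$ this form corresponds to the generator $m_{1}\wedge\cdots\wedge m_{n}$ of $\bigwedge^{n}M$. The lattice automorphism $h$ of $N$ induces an automorphism of $T = N\otimes\C^{*}$ whose action on $\bigwedge^{n}M$ is multiplication by the determinant of the induced (contragredient) action of $h$ on $M$, and this equals $\det h$ because $\det h = \pm 1$ for $h \in \mathbf{GL}(3,\Z)$. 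Hence $h^{*}(\Omega|_{T}) = (\det h)\,\Omega|_{T}$, and since $h^{*}\Omega$ and $(\det h)\,\Omega$ are meromorphic $n$-forms on $V_\Sigma$ agreeing on the dense open set $T$, they agree on all of $V_\Sigma$. Combining (i) and (ii): $h^{*}\omega = \mathrm{Res}(h^{*}\Omega/p) = (\det h)\,\mathrm{Res}(\Omega/p) = (\det h)\,\omega$.

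I expect the main obstacle to be the bookkeeping in part (ii): one must pin down how $h$ permutes the coordinates $z_{k}$ (through $v_{h(k)} = h(v_{k})$) and check that the coefficients $\det v_{I}$ together with the monomials $\hat z_{I}\,dz_{I}$ in $\Omega$ transform compatibly. Restricting to the torus and using the $d\log$ description avoids most of this, but one still needs to confirm that $\Omega$ really does restrict to a nonzero multiple of the invariant top form there — otherwise the density argument is vacuous — and to be slightly careful about which automorphism of $M$ appears, though its determinant is forced to be $\det h$ since every element of $\mathbf{GL}(3,\Z)$ has determinant $\pm 1$. Part (i) is routine once one notes that an automorphism of $V_\Sigma$ fixing $X$ maps tubes to tubes.
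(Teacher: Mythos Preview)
Your argument is correct in outline and matches the paper's overall strategy: reduce to $h^{*}p = p$ (from the definition of $\mathcal{F}_\diamond$) together with $h^{*}\Omega = (\det h)\,\Omega$, then conclude via the residue. Your part (i) makes explicit the compatibility $h^{*}\circ\mathrm{Res} = \mathrm{Res}\circ h^{*}$, which the paper leaves implicit in the single line ``$h^{*}(\omega) = \mathrm{Res}(\Omega'/p)$''.

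For part (ii) the two arguments diverge slightly. The paper computes directly from the homogeneous-coordinate formula: viewing $h$ as acting on $M$ by the inverse matrix, the form $\Omega$ built from the basis $m_{1},m_{2},m_{3}$ is carried to the form $\Omega'$ built from $h^{-1}(m_{1}),h^{-1}(m_{2}),h^{-1}(m_{3})$, and one reads off $\Omega' = (\det h^{-1})\,\Omega = (\det h)\,\Omega$ from the determinantal coefficients $\det v_{I}$. Your torus-restriction route is a legitimate alternative that avoids the index bookkeeping, but the assertion that $\Omega|_{T}$ is a \emph{constant} multiple of $\frac{dt_{1}}{t_{1}}\wedge\cdots\wedge\frac{dt_{n}}{t_{n}}$ is not quite right. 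Writing $t_{j} = \prod_{k} z_{k}^{v_{kj}}$ gives $\frac{dt_{j}}{t_{j}} = \sum_{k} v_{kj}\,\frac{dz_{k}}{z_{k}}$, and wedging yields
\[
\Omega \;=\; (z_{1}\cdots z_{q})\cdot \phi^{*}\!\left(\frac{dt_{1}}{t_{1}}\wedge\cdots\wedge\frac{dt_{n}}{t_{n}}\right)
\]
on $(\mathbb{C}^{*})^{q}$. The extra factor $z_{1}\cdots z_{q}$ is harmless for your purposes --- it is invariant under the permutation action of $h$ on the coordinates --- so $h^{*}\Omega = (\det h)\,\Omega$ still follows, but you should state the restriction correctly. (Your parenthetical appeal to the $\mathbb{P}^{n}$ remark does not support the ``constant multiple'' claim either: the usual Euler form on $\mathbb{P}^{n}$ restricts to $t_{1}\cdots t_{n}\,\frac{dt_{1}}{t_{1}}\wedge\cdots\wedge\frac{dt_{n}}{t_{n}}$ on the torus, not to a constant multiple of the invariant form.) With that correction in place your proof is complete and, modulo the different bookkeeping in step (ii), equivalent to the paper's.
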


\begin{proof}
Once again, we use the fact that we may view $h$ as either an automorphism of the lattice $N$ which maps $\diamond$ to itself, or as an automorphism of the dual lattice $M$ which restricts to an automorphism of $\diamond^\circ$.  (If we fix a basis $\{n_1,n_2,n_3\}$ of $N$, take the dual basis $\{m_1,m_2,m_3\} = \{n_1^*,n_2^*,n_3^*\}$ on $M$, and treat $h$ as a matrix, then $h$ acts on $M$ by the inverse matrix.)  By Proposition~\ref{P:Mav}, each choice of basis for $M$ yields a generator of $H^{3,0}(V)$.  Thus, if $\Omega$ is the generator of $H^{3,0}(V)$ corresponding to a fixed choice of integer basis $m_1,m_2,m_3$, we see that we may obtain a new generator $\Omega'$ of $H^{3,0}(V)$ by applying the change of basis $h^{-1}$ to $M$.  Recall that $\Omega = \sum_{|I|=3}\mathrm{det}\,v_I \hat{z}_I dz_I$, where $\mathrm{det}\,v_I = \mathrm{det}\,(\langle m_j, v_{i_k} \rangle_{1 \leq j, i_k \leq 3})$.

We compute: 

\begin{align}
\Omega' &= \sum_{|I|=3}\mathrm{det}\,(h^{-1}(v_I)) \hat{z}_I dz_I \\
&= \sum_{|I|=3}\mathrm{det}\,(h^{-1})\mathrm{det}\,v_I \hat{z}_{I} dz_{I}\\
&= \mathrm{det}\,h \sum_{|I|=3}\mathrm{det}\,v_I \hat{z}_{I} dz_{I}
\end{align}  
since $\mathrm{det}\,h = \pm 1$.

By Proposition~\ref{P:FanSymmetryFamily}, $h^*(p) = p$, so $h^*(\omega) = \mathrm{Res}(\Omega'/p) = (\mathrm{det}\,h) \omega$.
\end{proof}

Thus the group $G$ of orientation-preserving automorphisms of $\diamond$ which preserve $\Sigma$ acts symplectically on quasismooth members of $\mathcal{F}_\diamond$.

The largest group which occurs as the orientation-preserving automorphism group of a three-dimensional lattice polytope is $S_4$.  There are three distinct pairs of isomorphism classes of reflexive polytopes which have this symmetry group.  In the following examples, we analyze families derived from these pairs of polytopes.

\begin{example}\label{E:CubeAndOctahedron}
Let $\diamond$ be the cube with vertices of the form $(\pm 1, \pm 1, \pm 1)$.  The dual polytope $\diamond^\circ$ is an octahedron, with vertices $\{(\pm 1,0,0), (0,\pm 1, 0), (0,0,\pm 1)\}$.  We may choose our fan $\Sigma$ such that the group of lattice automorphisms of $\diamond$ preserves $\Sigma$.  The group $G$ of orientation-preserving automorphisms of $\diamond$ is isomorphic to $S_4$.  $\mathcal{F}_\diamond$ is a one-parameter family, and if $X$ is a quasismooth member of $\mathcal{F}_\diamond$, $\mathrm{rank} \; \mathrm{Pic}(X) \geq  19$.
\end{example}

\begin{proof}
The action of $G$ on $\diamond^\circ$ has two orbits: the origin, and the vertices of the octahedron.  Thus, $\mathcal{F}_\diamond$ is a one-parameter family.  Using Lemma~\ref{L:S4}, we conclude that for any quasismooth member of $\mathcal{F}_\diamond$, $\mathrm{rank}\,S_G = 17$. 

Let $X$ be a quasismooth member of $\mathcal{F}_\diamond$.  We wish to determine which of the divisors of $X$ inherited from the ambient toric variety $V_\Sigma$ are in $H^2(X,\mathbb{Z})^G$.  The action of $G$ on the lattice points of $\diamond$ has four orbits: the origin, the vertices of the cube, the interior points of edges, and interior points of faces.  Let $v_1, \dots, v_8$ be the vertices of the cube and $v_9, \dots, v_{20}$ be the interior points of edges; let $W_1, \dots, W_{20}$ be the corresponding torus-invariant divisors of the toric variety $V_\Sigma$.  Since $v_1, \dots, v_8$ and $v_9, \dots, v_{20}$ are orbits of the action of $G$, $W_1 + \dots + W_8$ and $W_9 + \dots + W_{20}$ are elements of $\mathrm{Pic}(V)$ which are fixed by $G$.  These two divisors span a rank-two lattice in $\mathrm{Pic}(V)$.  Since there are no lattice points strictly in the interior of the edges of $\diamond^\circ$ and none of the points $v_1, \dots, v_{20}$ lies in the relative interior of a facet of $\diamond$, $W_k \cap X$ is connected and nonempty for $1 \leq k \leq 20$ and the divisors $W_1 \cap X + \dots + W_8 \cap X$ and $W_9 \cap X + \dots + W_{20} \cap X$ span a rank-two lattice in $\mathrm{Pic}(X)$.  This rank-two lattice is contained in $H^2(X,\mathbb{Z})^G$.

Since $S_G$ is the perpendicular complement of $H^2(X,\mathbb{Z})^G$, $\mathrm{rank} \; \mathrm{Pic}(X) \geq  17 + 2 = 19$.
\end{proof}

\begin{remark}
This family is analyzed in \cite{PS} and \cite{HLOY}.
\end{remark}

\begin{example}
Let $\diamond$ be a three-dimensional reflexive polytope with fourteen vertices and twelve faces.  Up to lattice isomorphism, $\diamond$ is unique; moreover, $\diamond$ has the most vertices of any three-dimensional reflexive polytope.  We may choose our fan $\Sigma$ such that the group of lattice automorphisms of $\diamond$ preserves $\Sigma$.  The group $G$ of orientation-preserving automorphisms of $\diamond$ is isomorphic to $S_4$, and $\mathcal{F}_\diamond$ is a one-parameter family.  If $X$ is a quasismooth member of $\mathcal{F}_\diamond$, $\mathrm{rank} \; \mathrm{Pic}(X) \geq  19$.
\end{example}

\begin{proof}
The lattice points of $\diamond^\circ$ consist of vertices and the origin, and $G$ acts transitively on the vertices of $\diamond^\circ$, so $\mathcal{F}_\diamond$ is a one-parameter family.  As above, Lemma~\ref{L:S4} shows that for any quasismooth member of $\mathcal{F}_\diamond$, $\mathrm{rank}\,S_G = 17$. 

Let $X$ be a quasismooth member of $\mathcal{F}_\diamond$.  Once again, we determine which of the divisors of $X$ inherited from the ambient toric variety $V_\Sigma$ are in $H^2(X,\mathbb{Z})^G$.  The action of $G$ on the lattice points of $\diamond$ has three orbits; one orbit contains the origin, another contains eight vertices, and the last contains the remaining six vertices.  Let $\{v_1, \dots, v_8\}$ and $\{v_9, \dots, v_{14}\}$ be the vertex orbits; let $W_1, \dots, W_{14}$ be the corresponding torus-invariant divisors of $V_\Sigma$.  Then $W_1 + \dots + W_8$ and $W_9 + \dots + W_{14}$ are elements of $\mathrm{Pic}(V)$ fixed by the action of $G$; these two divisors span a rank-two lattice in $\mathrm{Pic}(V)$.  Since there are no lattice points strictly in the interior of the edges of $\diamond^\circ$ and the facets of $\diamond$ have no points in their relative interiors, $W_k \cap X$ is connected and nonempty for $1 \leq k \leq 14$ and the divisors $W_1 \cap X + \dots + W_8 \cap X$ and $W_9 \cap X + \dots + W_{14} \cap X$ span a rank-two lattice in $\mathrm{Pic}(X)$.  This rank-two lattice is contained in $H^2(X,\mathbb{Z})^G$, so $\mathrm{rank} \; \mathrm{Pic}(X) \geq  17 + 2 = 19$. 

\end{proof}

\begin{remark}
An explicit analysis of the same family appears in \cite{Verrill}.
\end{remark}

\begin{example}\label{E:OctahedronAndSkewCube}
Let $\diamond$ be the octahedron with vertices $(1, 1, 1)$, $(-1, -1, 1)$, $(-1, 1, -1)$, $(1, -1, 1)$, $(1, 1, -1)$, and $(-1, -1, -1)$.  The polar dual $\diamond^\circ$ has vertices $(1, 0, 0)$, $(0, 1, 0)$, $(0, 0, 1)$, $(-1, 1, 1), (1, -1, -1)$, $(0, 0, -1)$, $(0, -1, 0)$, and $(-1, 0, 0)$.  We may choose our fan $\Sigma$ such that the group of lattice automorphisms of $\diamond$ preserves $\Sigma$.  The group $G$ of orientation-preserving automorphisms of $\diamond$ is isomorphic to $S_4$.  $\mathcal{F}_\diamond$ is a one-parameter family.  If $X$ is a quasismooth member of $\mathcal{F}_\diamond$, $\mathrm{rank} \; \mathrm{Pic}(X) \geq  19$.
\end{example}

\begin{proof}
The action of $G$ on $\diamond^\circ$ has two orbits, the origin and the polytope's vertices, so $\mathcal{F}_\diamond$ is a one-parameter family.  As in the previous example, Lemma~\ref{L:S4} shows that for any quasismooth member of $\mathcal{F}_\diamond$, $\mathrm{rank}\,S_G = 17$. 

Let $X$ be a quasismooth member of $\mathcal{F}_\diamond$.  As before, we determine which of the divisors of $X$ inherited from the ambient toric variety $V_\Sigma$ are in $H^2(X,\mathbb{Z})^G$.  The action of $G$ on the lattice points of $\diamond$ has three orbits: the origin, the octahedron's vertices, and the interior points of edges.  Let $v_1, \dots, v_6$ be the vertices and $v_7, \dots, v_{18}$ be the interior points of edges; let $W_1, \dots, W_{18}$ be the corresponding torus-invariant divisors of $V_\Sigma$.  Then $W_1 + \dots + W_6$ and $W_7 + \dots + W_{18}$ are elements of $\mathrm{Pic}(V)$ fixed by the action of $G$.  These two divisors span a rank-two lattice in $\mathrm{Pic}(V)$.  Since there are no lattice points strictly in the interior of the edges of $\diamond^\circ$ and the facets of $\diamond$ have no points in their relative interiors, $W_k \cap X$ is connected and nonempty for $1 \leq k \leq 18$ and the divisors $W_1 \cap X + \dots + W_6 \cap X$ and $W_7 \cap X + \dots + W_{18} \cap X$ span a rank-two lattice in $\mathrm{Pic}(X)$.  This rank-two lattice is contained in $H^2(X,\mathbb{Z})^G$.  Thus, $\mathrm{rank} \; \mathrm{Pic}(X) \geq  17 + 2 = 19$. 
\end{proof}


\section{Picard-Fuchs Equations}

\subsection{The Griffiths-Dwork technique}

A \emph{period} is the integral of a differential form with respect to a specified homology class.  The \emph{Picard-Fuchs differential equation} of a family of varieties is a differential equation which describes the way the value of a period changes as we move through the family.  We may use Picard-Fuchs differential equations for periods of holomorphic forms to understand the way the complex structure of a family of varieties varies within the family.  The \emph{Griffiths-Dwork technique} provides an algorithm for computing Picard-Fuchs equations for families of hypersurfaces in projective space.  This technique has been generalized to hypersurfaces in weighted projective space and in some toric varieties.  Unlike other methods for computing Picard-Fuchs equations, the Griffiths-Dwork technique allows the study of arbitrary rational parametrizations.  

Let us begin by reviewing the Griffiths-Dwork technique for one-parameter families of hypersurfaces $X_t$ in $\mathbb{P}^n$ described by homogeneous polynomials $p_t$ of degree $\ell$.  We may define a flat family
of cycles $\gamma_t$. 
We then differentiate as follows:

\begin{align}
\frac{d}{dt} \int_{\gamma_t} \mathrm{Res}\left(\frac{A \Omega}{p_t^k}\right) &= \int_{\gamma_t} \mathrm{Res}\left(\frac{d}{dt}\left(\frac{A \Omega}{p_t^k}\right)\right) \\
& = -k \int_{\gamma_t} \mathrm{Res}\left(\frac{(\frac{dp_t}{dt}) A \Omega}{p_t^{k+1}}\right). \notag
\end{align}

Thus, we may express successive derivatives of the period $\int_{\gamma_t} \frac{\Omega}{p}$ as periods of the residues of rational forms. If $H^{n-1}(X,\mathbb{C})$ is $r$-dimensional as a vector space over $\mathbb{C}$, then at most $r$ residues of rational forms can be
linearly independent. Therefore, the period must satisfy a linear differential equation with coefficients in
$\mathbb{C}(t)$ of order at most $r$; this linear differential equation is the Picard--Fuchs differential equation which we seek.  

In order to compute the Picard-Fuchs differential equation in practice, we need a way to compare expressions of the form $\mathrm{Res}\left(\frac{A \Omega}{p_t^{k}}\right)$ to expressions of the form $\mathrm{Res}\left(\frac{B \Omega}{p_t^{k+1}}\right)$.
Suppose we have an element of $H^{n-1}(X,\mathbb{C})$ of the form $\mathrm{Res}\left(\frac{K \Omega_0}{p^{k+1}}\right)$, where $K = \sum_i A_i \frac{\partial p}{\partial x_i}$ is a member of the Jacobian ideal, and each $A_i$ is a homogeneous polynomial of degree $k\cdot \ell -n$.
Then the following equation allows us to reduce the order of the pole:

\begin{equation}\label{E:reducePoleOrder}
\frac{\Omega_0}{p^{k+1}} \sum_i A_i \frac{\partial p}{\partial x_i} = \frac{1}{k} \frac{\Omega_0}{p^{k}} \sum_i \frac{\partial A_i}{\partial x_i} + \mathrm{exact}\;\mathrm{terms}
\end{equation}

We may find the Picard-Fuchs equation by systematically taking derivatives of $\int_{\gamma_t}\mathrm{Res}\left(\frac{\Omega_0}{p}\right)$ and using \ref{E:reducePoleOrder} to
rewrite the results in terms of a standard basis for $H^{n-1}(X,\mathbb{C})$.  This method is known as the \emph{Griffiths-Dwork technique}.  Practical implementations of the Griffiths-Dwork technique use the Jacobian ring $J(p)$ and the induced residue map $\mathrm{Res}_J$ to transform the problem into a computation suitable for a computer algebra system.  (See \cite{CoxKatz} or \cite{DGJ} for a more detailed discussion of the technique.)

In order to extend the Griffiths-Dwork technique to hypersurfaces in toric varieties, we need two tools: an appropriate version of the residue map, and an analogue of Equation~\ref{E:reducePoleOrder} to reduce the order of the poles.  In the case of semiample hypersurfaces in toric varieties, we may use the results of \cite{BC} and \cite{Mavlyutov} described in \S~\ref{SS:residue} to define $\mathrm{Res}$.  We must be aware, however, that the induced residue map $\mathrm{Res}_J$ need not be injective for an arbitrary family of semiample hypersurfaces.  

To construct an analogue of Equation~\ref{E:reducePoleOrder}, we note that the results of \cite{BC} apply in the semiample case:
\begin{definition}\cite[Definition 9.8]{BC}
Let $i \in \{1, \dots, q\}$.  We define the $n-1$-form $\Omega_i$ on $V_\Sigma$ as follows:
\[\Omega_i = \sum_{|J| = n-1, i \notin J} \mathrm{det}(v_{\{i\} \cup J }) \hat{z}_{\{i\} \cup J} dz_J.\]
Here we use the convention that $i$ is the first element of $\{i\} \cup J$.
\end{definition}

\begin{lemma}\cite[Lemma 10.7]{BC}
If $A \in S_{k\beta -\beta_0 + \beta_i}$, then:
\[d\left( \frac{A \Omega_i}{p^{k}}\right) = \frac{\left( p \frac{\partial A}{\partial z_i} - k A \frac{\partial p}{\partial z_i}\right) \Omega_0}{p^{k+1}}.\]
\end{lemma}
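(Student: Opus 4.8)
The plan is to verify the identity by a direct computation of the exterior derivative of the $(n-1)$-form $A\Omega_i/p^k$, working in global homogeneous coordinates on $V_\Sigma$ and then comparing the result with the claimed right-hand side. First I would expand $d(A\Omega_i/p^k)$ by the Leibniz rule:
\[
d\!\left(\frac{A\Omega_i}{p^k}\right) = \frac{dA \wedge \Omega_i}{p^k} - k\,\frac{A\,dp \wedge \Omega_i}{p^{k+1}} + \frac{A\,d\Omega_i}{p^k}.
\]
Since the coefficients of $\Omega_i$ are polynomials, $d\Omega_i$ is a polynomial $n$-form; I would check that $d\Omega_i$ is in fact proportional to the basic form appearing on the right-hand side, so that after combining with the other terms it merges cleanly. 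Writing $dA = \sum_j (\partial A/\partial z_j)\,dz_j$ and $dp = \sum_j (\partial p/\partial z_j)\,dz_j$, the two middle terms contribute $\big(\sum_j (\partial A/\partial z_j)\,dz_j\big)\wedge\Omega_i / p^k$ and an analogous expression with $A$ and $p$ swapped; wedging a single $dz_j$ against the $(n-1)$-form $\Omega_i$ collapses the sum over $j$ so that only the index-matching terms survive, reproducing the combination $\big(p\,\partial A/\partial z_i - kA\,\partial p/\partial z_i\big)$ up to the universal $n$-form factor.

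The key combinatorial lemma I need is the contraction identity $dz_j \wedge \Omega_i = \delta_{ij}\,\Omega_0$ (or, more precisely, a signed/degree-weighted version of it compatible with the $\hat z$ factors and the determinants $\det v_I$). This is essentially the content of \cite[Chapter 9--10]{BC}: the forms $\Omega_0, \Omega_1, \dots, \Omega_q$ are set up so that $\Omega_i$ behaves like a "contraction of $\Omega_0$ against the $i$-th coordinate direction," and the defining sums over subsets $J$ with $i\notin J$ are arranged to telescope. I would invoke this directly rather than re-derive it, since it is stated earlier in the cited source; the role of the determinants $\det v_I$ and the monomials $\hat z_I$ is precisely to make these contractions independent of the chosen lattice basis and globally well-defined on $V_\Sigma$ (not merely on the torus).

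Once the contraction identity is in hand, the computation is bookkeeping: I would track the sign conventions (the stated convention that $i$ is listed first in $\{i\}\cup J$ fixes these), verify that the degree hypothesis $A\in S_{k\beta - \beta_0 + \beta_i}$ makes each term a well-defined rational $n$-form of the correct homogeneous degree (so that the expression is a genuine form on $V_\Sigma$, pulled back from the quotient), and confirm that the $d\Omega_i$ term either vanishes or is absorbed. The main obstacle is the middle step: showing that $dp\wedge\Omega_i$ and $dA\wedge\Omega_i$ reduce to single terms involving only $\partial p/\partial z_i$ and $\partial A/\partial z_i$ respectively. One might worry that the "off-diagonal" contributions $dz_j\wedge\Omega_i$ for $j\neq i$ survive; I expect they cancel in pairs because of antisymmetry of the wedge together with the symmetry of the determinant expansion, but this cancellation is exactly where the careful indexing of \cite{BC} does the work, so I would lean on that reference for the identity $dz_j\wedge\Omega_i = \delta_{ij}\Omega_0$ and treat the rest as a short formal manipulation.
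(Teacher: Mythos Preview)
The paper does not prove this lemma at all; it is simply quoted from \cite[Lemma~10.7]{BC} and then used to derive the reduction-of-pole-order formula. So there is no ``paper's own proof'' to compare against, and your sketch already goes further than the paper does.

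That said, a word of caution about the key step you isolate. The literal identity $dz_j \wedge \Omega_i = \delta_{ij}\,\Omega_0$ is \emph{false}: wedging $dz_i$ against $\Omega_i$ picks out only those terms of $\Omega_0$ indexed by $I$ with $i \in I$, not all of $\Omega_0$, and for $j \neq i$ the wedge $dz_j \wedge \Omega_i$ is not zero either (the $\hat z$ factors still contain $z_j$ when $j \notin \{i\}\cup J$). What actually happens in \cite{BC} is that the off-diagonal contributions from $dA \wedge \Omega_i$ combine with $A\,d\Omega_i$ via an Euler-type relation coming from the grading hypothesis $A \in S_{k\beta - \beta_0 + \beta_i}$, and it is this combination that collapses to $(\partial A/\partial z_i)\,\Omega_0$. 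You anticipate something like this when you hedge toward ``a signed/degree-weighted version,'' but the cancellation is not purely combinatorial antisymmetry of the wedge; the degree condition on $A$ is doing real work. If you want a self-contained argument rather than a citation, that is the place to be precise.
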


Now, let $X$ be a hypersurface in a toric variety $V_\Sigma$ described by a homogeneous polynomial $p \in S_\beta$.  Suppose we have an element of $H^{n-1}(X,\mathbb{C})$ of the form $\mathrm{Res}\left(\frac{K \Omega_0}{p^{k+1}}\right)$, where $K = \sum_i A_i \frac{\partial p}{\partial x_i}$ is a member of the Jacobian ideal, and $A_i \in S_{k\beta -\beta_0 + \beta_i}$.  The following reduction of pole order equation follows immediately:

\begin{equation}\label{E:reducePoleOrderToric}
\frac{\Omega_0}{p^{k+1}} \sum_i A_i \frac{\partial p}{\partial x_i} = \frac{1}{k} \frac{\Omega_0}{p^{k}} \sum_i \frac{\partial A_i}{\partial x_i} + \mathrm{exact}\;\mathrm{terms}
\end{equation}

\subsection{A Picard-Fuchs equation}


Let $\diamond$ be the octahedron with vertices $(1, 1, 1)$, $(-1, -1, 1)$, $(-1, 1, -1)$, $(1, -1, 1)$, $(1, 1, -1)$, and $(-1, -1, -1)$, as in Example~\ref{E:OctahedronAndSkewCube}, and let $\mathcal{F}_\diamond$ be the associated one-parameter family.  In this section, we describe the Picard-Fuchs equation for $\mathcal{F}_\diamond$.  We use our result to show that the Picard rank of a general member of $\mathcal{F}_\diamond$ is exactly $19$. 

Doran analyzed the properties of Picard-Fuchs equations for lattice-polarized families of K3 surfaces with Picard rank $19$ in \cite{Doran}, and showed that the Picard-Fuchs equations for the K3 surfaces are related to Picard-Fuchs equations for families of elliptic curves.

\begin{proposition}\cite[Lemma 3.1.(b)]{Singer}
Let $L(y)$ be a homogeneous linear differential polynomial with coefficients in ${\mathbb{C}}(t)$.  Then there exists a homogeneous linear differential equation $M(y) = 0$ with coefficients in $\mathbb{C}(t)$ and solution space the $\mathbb{C}$-span of 
$$\{\nu_1 \nu_2 \ | \ L(\nu_1) = 0 \ \mbox{and} \ L(\nu_2) = 0 \} \ .$$
\end{proposition}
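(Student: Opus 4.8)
The plan is to realize $M$ as the \emph{symmetric square} of the operator $L$, built by a Wronskian formula, and then to use differential Galois theory to force the coefficients of $M$ to be rational. First I would reduce to the interesting case: if $L$ has order $0$ or is identically $0$ the statement is trivial, so assume $L$ is monic of order $n\ge 1$. Pass to a Picard--Vessiot extension $K\supseteq\mathbb{C}(t)$ for $L$; then $K$ has field of constants $\mathbb{C}$ and contains an $n$-dimensional $\mathbb{C}$-subspace $V\subseteq K$ consisting of solutions of $L(y)=0$. Let $\tilde V\subseteq K$ be the $\mathbb{C}$-span of all products $\nu_1\nu_2$ with $\nu_1,\nu_2\in V$. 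Since $\tilde V$ is the image of the multiplication map $\mathrm{Sym}^2 V\to K$, it is finite-dimensional, say $\dim_{\mathbb{C}}\tilde V=m\le\binom{n+1}{2}$. Choosing a $\mathbb{C}$-basis $u_1,\dots,u_m$ of $\tilde V$, the fact that $K$ has constant field $\mathbb{C}$ makes $\mathbb{C}$-linear independence equivalent to non-vanishing of the Wronskian, so $\mathrm{Wr}(u_1,\dots,u_m)\in K$ is nonzero and
\[M(y)\;:=\;\frac{\mathrm{Wr}(u_1,\dots,u_m,y)}{\mathrm{Wr}(u_1,\dots,u_m)}\]
is a monic linear differential operator of order $m$ with coefficients in $K$ whose solution space in $K$ is exactly $\tilde V$: each $u_i$ is killed, and an operator of order $m$ has at most $m$ independent solutions.

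Next I would descend the coefficients of $M$ from $K$ to $\mathbb{C}(t)$. The differential Galois group $G=\mathrm{Gal}(K/\mathbb{C}(t))$ acts $\mathbb{C}$-linearly on $V$, and because the multiplication map $V\times V\to K$ is $G$-equivariant, the subspace $\tilde V$ is $G$-stable. For $g\in G$, applying $g$ to the coefficients of $M$ produces an operator $g(M)$ that is again monic of order $m$ and annihilates $g(\tilde V)=\tilde V$. Since the monic operator of order $m$ with a prescribed $m$-dimensional solution space is unique (the difference of two such would be a lower-order operator vanishing on an $m$-dimensional space), $g(M)=M$ for all $g\in G$. Hence every coefficient of $M$ lies in $K^G=\mathbb{C}(t)$, and by construction the solution space of $M$ is the $\mathbb{C}$-span of $\{\nu_1\nu_2 \mid L(\nu_1)=L(\nu_2)=0\}$, which is what is claimed.

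The one step that is not purely formal is this last descent: guaranteeing that the coefficients are genuinely in $\mathbb{C}(t)$ rather than merely in the Picard--Vessiot extension $K$. The Galois-invariance argument above is the cleanest way to see it. A reader who prefers to avoid differential Galois theory can instead observe that the $\mathbb{C}(t)$-span $W$ of $\{\nu_1^{(i)}\nu_2^{(j)}\mid \nu_1,\nu_2\in V,\ i,j\ge 0\}$ is finite-dimensional over $\mathbb{C}(t)$ — every derivative of order $\ge n$ of a solution is reduced, via $L$, to a $\mathbb{C}(t)$-combination of lower derivatives — and is stable under $d/dt$, and then extract a differential equation from a $\mathbb{C}(t)$-linear dependence among $u,\,u',\,u'',\dots$; but identifying the solution space of the operator so obtained with exactly $\tilde V$ (rather than something larger) requires an extra dimension-count and essentially re-proves the uniqueness point, so it is no shorter. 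Everything else — finite-dimensionality of $\tilde V=\operatorname{im}(\mathrm{Sym}^2 V\to K)$, the Wronskian construction of a monic operator with prescribed solution space, and uniqueness of that operator — is standard.
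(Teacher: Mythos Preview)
The paper does not prove this proposition at all: it is quoted verbatim from \cite{Singer} with a citation, and then simply used to define the symmetric square (the paper only needs the explicit formula~(\ref{E:symmetricSquare}) for the case $n=2$). So there is no ``paper's own proof'' to compare against.

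That said, your argument is correct and is the standard one. The two ingredients --- the Wronskian construction of a monic operator with prescribed finite-dimensional solution space inside a differential field with no new constants, and the descent of coefficients to $\mathbb{C}(t)$ via the differential Galois correspondence $K^{G}=\mathbb{C}(t)$ --- are exactly what Singer's proof uses. Your alternative sketch (working with the $\mathbb{C}(t)$-span of the $\nu_1^{(i)}\nu_2^{(j)}$ and extracting a linear relation) is also viable and is closer to how one computes symmetric squares in practice; as you note, pinning the solution space to exactly $\tilde V$ rather than a possibly larger space then requires the same dimension/uniqueness count, so nothing is saved. One minor clarification worth making explicit: when you say ``solution space,'' you mean the full solution space in any no-new-constants extension, and this is indeed $\tilde V$ because an order-$m$ operator can have at most $m$ $\mathbb{C}$-independent solutions and you already have $m$ of them in $K$.
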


\begin{definition}
We call the operator $M(y)$ constructed above the {\em symmetric square} of $L$.
\end{definition}

The symmetric square of the second-order linear, homogeneous differential equation
$$
a_2\derive{\omega}{2} + a_1\frac{\partial \omega}{\partial t} + a_0\omega = 0
$$
is
\begin{equation}\label{E:symmetricSquare}
a_2^2\derive{\omega}{3} + 3a_1a_2\derive{\omega}{2} + (4a_0a_2+2a_1^2+a_2a_1'-a_1a_2')\frac{\partial \omega}{\partial t} + (4a_0a_1 + 2a_0'a_2-2a_0a_2')\omega = 0
\end{equation}
where primes denote derivatives with respect to $t$.

\begin{theorem}\cite[Theorem 5]{Doran}\label{T:DSymmSquare} The Picard-Fuchs equation of a family of rank-$19$ lattice-polarized K3 surfaces is a third-order ordinary differential equation which can be written as the symmetric square of a second-order homogeneous linear Fuchsian differential equation. 
\end{theorem}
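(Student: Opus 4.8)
Since this is Doran's theorem, the plan is to reproduce the argument, which rests on the accidental isomorphism $\mathrm{SO}_0(2,1) \cong \mathrm{PSL}_2(\mathbb{R})$. First I would pin down the Hodge-theoretic setup. If $X_t$ is a family of K3 surfaces polarized by a lattice $M$ of rank $19$, then $M$ has signature $(1,18)$ (it contains an ample class), so the generic transcendental lattice $T = M^{\perp} \subset H^2(X_t,\mathbb{Z})$ has rank $22-19 = 3$ and signature $(2,1)$. The local system $\mathcal{T}$ of transcendental cycles carries a polarized weight-$2$ variation of Hodge structure with $h^{2,0} = h^{1,1} = h^{0,2} = 1$, and by Lemma~\ref{P:Mav} the line $T^{2,0}_t$ is spanned by the class of the holomorphic form $\omega$. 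The periods $\int_{\gamma_i} \omega$ over a basis $\gamma_1,\gamma_2,\gamma_3$ of $\mathcal{T}$ then satisfy a Fuchsian linear ODE $L$ --- regular singular because it is a Gauss--Manin connection on an algebraic family, and of order exactly $3$ provided the family is non-isotrivial, equivalently on the locus where the period map is an immersion. This $L$ is the Picard--Fuchs operator whose structure we must determine.

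Next I would invoke the exceptional isomorphism. The monodromy of $\mathcal{T}$ preserves the intersection form $Q$, hence lies in $\mathrm{O}(T\otimes\mathbb{R}) = \mathrm{O}(2,1)$, and after a finite \'etale base change (which does not affect whether $L$ is a symmetric square) it lies in $\mathrm{SO}_0(2,1) \cong \mathrm{PSL}_2(\mathbb{R})$. Under this isomorphism the standard $3$-dimensional representation of $\mathrm{SO}(2,1)$ is the symmetric square of the standard $2$-dimensional representation of $\mathrm{SL}_2$; concretely, the period map of the family lands in the $1$-dimensional period domain for $\mathrm{SO}(2,1)$, which is isomorphic to the upper half-plane, the period domain of elliptic curves. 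Lifting the monodromy through $\mathrm{Spin}(2,1) = \mathrm{SL}_2(\mathbb{R}) \to \mathrm{SO}_0(2,1)$ (possible up to a rank-$1$ $\pm 1$ twist, to which the symmetric-square operation is insensitive) yields a rank-$2$ local system $\mathcal{V}$ with $\mathrm{Sym}^2\mathcal{V} \cong \mathcal{T}$. A choice of the Hodge line $T^{2,0}_t$ is the same as a choice of line in $V_t$, which equips $\mathcal{V}$ with a polarized weight-$1$ VHS (positivity of the Hodge form on $\mathcal{V}$ follows from that on $\mathcal{T}$, and Griffiths transversality is automatic for a rank-$2$ weight-$1$ VHS). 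In other words, the weight-$2$ transcendental VHS of the K3 family is $\mathrm{Sym}^2$ of a weight-$1$, elliptic-curve-type VHS.

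To finish I would translate this back to differential operators. The rank-$2$ VHS $\mathcal{V}$ has its own Picard--Fuchs operator $L_2$, a second-order Fuchsian operator, and a standard computation with the Gauss--Manin connection shows that the Picard--Fuchs operator of $\mathrm{Sym}^2\mathcal{V}$ is precisely the symmetric square of $L_2$ in the sense of equation~\eqref{E:symmetricSquare}; since the Picard--Fuchs operator of $\mathcal{T}$ is $L$, this gives $L = \mathrm{Sym}^2(L_2)$. As an alternative to the last two steps, one can verify the classical self-duality criterion directly: the polarization makes the rank-$3$ Gauss--Manin $\mathcal{D}$-module isomorphic to its dual (via $Q$ and the Riemann--Hodge bilinear relations, notably $Q(\omega,\omega) = 0$), so the solution space of $L$ carries an $L$-invariant nondegenerate symmetric bilinear form; writing $L$ in reduced form $D^3 + aD + b$, this forces $b = \tfrac12 a'$, which is exactly the condition that $L = \mathrm{Sym}^2\bigl(D^2 + \tfrac14 a\bigr)$.

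The main obstacle is the passage from $\mathrm{SO}_0(2,1)$ down to $\mathrm{SL}_2$ --- equivalently, establishing the identity $b = \tfrac12 a'$ intrinsically. One must control the $\mathbb{Z}/2$ monodromy-lifting obstruction and check Hodge positivity for the rank-$2$ system so that $\mathcal{V}$ is a genuine polarized weight-$1$ VHS rather than merely a filtered flat bundle, and one must keep the non-isotriviality hypothesis in play so that $L$ really has order $3$. The first difficulty dissolves because the symmetric-square operation kills $\pm 1$ twists, so one never needs the lift itself, only $\mathrm{Sym}^2$ of it; the second is handled by restricting to an open subset of the base on which the period map is an immersion.
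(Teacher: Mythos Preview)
The paper does not supply its own proof of this statement: it is quoted verbatim as \cite[Theorem 5]{Doran} and used as a black box, with the subsequent computation~\eqref{E:squareRoot} serving only as an explicit verification in the specific case of $\mathcal{F}_\diamond$. So there is nothing in the paper to compare your argument against.

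That said, your sketch is a faithful reconstruction of Doran's reasoning. The essential ingredients are exactly the ones you isolate: the transcendental local system has rank~$3$ and carries the intersection form of signature $(2,1)$; the accidental isomorphism $\mathrm{SO}_0(2,1)\cong\mathrm{PSL}_2(\mathbb{R})$ identifies the $3$-dimensional orthogonal representation with $\mathrm{Sym}^2$ of the standard representation; and the first Riemann--Hodge bilinear relation $Q(\omega,\omega)=0$ is precisely the quadratic relation among the periods that forces the third-order operator into symmetric-square form. Your alternative route via the self-adjointness criterion (reducing $L$ to $D^3+aD+b$ and reading off $b=\tfrac12 a'$) is the classical differential-equations translation of the same Hodge-theoretic fact, and is in some ways the cleanest way to close the argument since it sidesteps the $\mathbb{Z}/2$ lifting issue entirely. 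The caveats you flag --- non-isotriviality to guarantee order exactly~$3$, and insensitivity of $\mathrm{Sym}^2$ to the $\pm 1$ twist --- are the right ones.
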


Recall that a member of $\mathcal{F}_\diamond$ is described by the polynomial
\begin{equation} p = ( c_Q \sum_{x \in \mathscr{Q}} \prod_{k=1}^{18} z_k^{\langle v_k, x \rangle + 1}) + \prod_{k=1}^{18} z_k,
\end{equation}
where $Q$ is the orbit consisting of the nonzero lattice points of $\diamond^\circ$.  To simplify our computations, we set $t = \frac{1}{c_Q}$ and work with hypersurfaces $X_t \in \mathcal{F}_\diamond$ described by the polynomial
\begin{equation}\label{E:S4pencil}  
f = ( \sum_{x \in \mathscr{Q}} \prod_{k=1}^{18} z_k^{\langle v_k, x \rangle + 1}) + t \prod_{k=1}^{18} z_k.
\end{equation}

\begin{theorem}The Picard-Fuchs equation for $\mathcal{F}_\diamond$ is 
\begin{equation}\label{E:picardFuchsEquation}
\derive{\omega}{3} + \frac{6(t^2-32)}{t(t^2-64)}\derive{\omega}{2} + \frac{7t^2-64}{t^2(t^2-64)}\frac{\partial \omega}{\partial t} + \frac{1}{t(t^2-64)}\omega.
\end{equation}
\end{theorem}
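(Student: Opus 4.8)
The plan is to carry out the Griffiths-Dwork reduction algorithm explicitly for the pencil $X_t$ defined by the polynomial $f$ in Equation~\eqref{E:S4pencil}, using the toric residue map $\mathrm{Res}_J$ and the reduction-of-pole-order identity~\eqref{E:reducePoleOrderToric}. First I would record the grading data: the degree $\beta$ of $f$, the anticanonical degree $\beta_0$, and the degrees $\beta_i = \deg(z_i)$, so that I can identify which graded pieces of the Jacobian ring $R(f)$ are relevant. Since $X_t$ is a K3 surface, $H^2(X_t,\mathbb{C})$ has the Hodge decomposition $1 + 20 + 1$, but the period of the holomorphic form $\omega = \mathrm{Res}(\Omega_0/f)$ together with its derivatives spans only the rank-$19$ lattice-polarized sub-variation; by Theorem~\ref{T:DSymmSquare} the Picard-Fuchs operator has order exactly $3$. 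So I expect to find that $\omega$, $\partial\omega/\partial t$, $\partial^2\omega/\partial t^2$, and $\partial^3\omega/\partial t^3$ become linearly dependent after reduction, and I would set up the computation to produce that relation.

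The concrete steps: (1) Differentiate the period $\int_{\gamma_t}\omega$ repeatedly using the rule $\frac{d}{dt}\mathrm{Res}(A\Omega_0/f^k) = -k\,\mathrm{Res}((\partial_t f)\,A\,\Omega_0/f^{k+1})$; since $\partial_t f = \prod_{k=1}^{18} z_k$, the numerators that appear are explicit monomials. (2) After three differentiations we land in $\mathrm{Res}(B\,\Omega_0/f^4)$ for various $B$ of the appropriate degree; use the Jacobian ideal $J(f)$ — generated by the $\partial f/\partial z_i$ — to write each such $B$, modulo $J(f)$, in terms of a chosen monomial basis for the relevant graded piece of $R(f)$, and apply~\eqref{E:reducePoleOrderToric} to push the pole order back down to $f^3$, then $f^2$, then $f$. (3) Collect the coefficients (which are rational functions of $t$, arising because $f$ depends on $t$) and solve the resulting linear system over $\mathbb{C}(t)$ for the monic third-order relation; this should be Equation~\eqref{E:picardFuchsEquation}. (4) Finally, to conclude that the Picard rank of a general member is exactly $19$ rather than $20$, observe that the operator~\eqref{E:picardFuchsEquation} is irreducible of order $3$ (equivalently, its symmetric-square structure from Theorem~\ref{T:DSymmSquare} has an irreducible order-$2$ factor), so the transcendental lattice has rank $3$ and $\mathrm{rank}\,\mathrm{Pic}(X_t) = 22 - 3 = 19$ for generic $t$; the bound $\geq 19$ from Example~\ref{E:OctahedronAndSkewCube} then gives equality.

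The main obstacle is step (2): one must verify that $X_t$ is quasismooth (so that $\mathrm{Res}_J$ is defined and the Jacobian ring has the expected Hilbert function in the relevant degrees) and that the graded pieces of $R(f)$ we need really are spanned by the monomial basis we pick, so that the reductions terminate and the linear algebra over $\mathbb{C}(t)$ is correctly posed. Because $V_\Sigma$ is not projective space, $\mathrm{Res}_J$ need not be injective, so I would need to check that the ambiguity does not affect the periods of the holomorphic form and its derivatives — e.g. by confining attention to the image of $\mathrm{Res}_J$ that pairs nontrivially with the transcendental cycles, or by noting that the kernel of $\mathrm{Res}_J$ sits in graded pieces disjoint from those carrying $\omega,\dots,\partial^3\omega/\partial t^3$. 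Once quasismoothness and the Hilbert-function bookkeeping are pinned down, the remainder is a (lengthy but routine) symbolic computation in a computer algebra system, and verifying that its output is the stated operator, together with checking the discriminant locus $t(t^2-64)$ matches the singular fibers of the pencil, completes the proof.
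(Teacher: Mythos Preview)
Your proposal is correct and follows essentially the same Griffiths--Dwork route as the paper: differentiate the period using $\partial_t f=\prod_k z_k$, reduce pole order via~\eqref{E:reducePoleOrderToric}, and read off the third-order relation. The paper's proof is terser because it records the single computational fact that drives everything, namely that $(z_1\cdots z_{18})^3$ already lies in $J(f)$ (verified in \textsc{Magma}), so a single reduction step suffices rather than a general linear system over $\mathbb{C}(t)$; and your step~(4) on the exact Picard rank is stated in the paper as a separate corollary, with a slightly different argument (isotriviality of rank-$20$ families forces $\omega$ constant, contradicting~\eqref{E:picardFuchsEquation}) rather than irreducibility of the operator.
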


\begin{proof}
We apply the the Griffiths-Dwork technique.  Let $\omega = \int \mathrm{Res}\left(\frac{\Omega}{f}\right)$ be a period of the holomorphic form.  The parameter $t$ only appears in a single term of $f$, so the derivatives of $\omega$ have a particularly nice form:
\begin{equation}
\frac{\partial^j}{\partial t^j} \omega = \int (-1)^j j! (z_1 \dots z_{18})^j \mathrm{Res}\left(\frac{\Omega}{f^{j+1}}\right).
\end{equation}

Using the computer algebra system \cite{Magma}, we find that $(z_1 \dots z_{18})^3 \in J$.  We may now apply Equation~\ref{E:reducePoleOrderToric} to compare $\frac{\partial^3}{\partial t^3} \omega$ to lower-order terms.  We conclude that $\omega$ must satisfy Equation~\ref{E:picardFuchsEquation}.
\end{proof}

\begin{corollary}
A general member of $\mathcal{F}_\diamond$ has Picard rank 19.
\end{corollary}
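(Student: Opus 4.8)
The plan is to combine the lower bound on the Picard rank established in Example~\ref{E:OctahedronAndSkewCube} with the fact that the Picard--Fuchs equation we have just computed, Equation~\ref{E:picardFuchsEquation}, has order three. Recall that for any K3 surface $X$ one has $\mathrm{rank}\,\mathrm{Pic}(X) + \mathrm{rank}\,\mathcal{T}(X) = 22$, since $H^2(X,\mathbb{Z}) \cong H\oplus H\oplus H\oplus E_8\oplus E_8$ has rank $22$ and $\mathcal{T}(X) = \mathrm{Pic}(X)^\perp$. The general member $X_t$ of $\mathcal{F}_\diamond$ is quasismooth — quasismoothness of $f$ holds off a proper closed subset of the base — so Example~\ref{E:OctahedronAndSkewCube} gives $\mathrm{rank}\,\mathrm{Pic}(X_t) \geq 19$, that is, $\mathrm{rank}\,\mathcal{T}(X_t) \leq 3$. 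It therefore suffices to show $\mathrm{rank}\,\mathcal{T}(X_t) \geq 3$ for general $t$.

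For that reverse inequality I would argue via the Gauss--Manin connection. The Picard rank of $X_t$ is constant, say equal to $\rho$, on the complement $U$ of a countable subset of the base; ``general member'' means $t \in U$, and $\rho$ is this generic value. Over $U$ the transcendental lattices $\mathcal{T}(X_t)$ fit together into a sub-local-system $\mathcal{T} \subset R^2\pi_*\mathbb{Q}|_U$ of rank $22-\rho$, underlying a variation of Hodge structure with $h^{2,0}=1$ and preserved by the Gauss--Manin connection $\nabla$. The holomorphic two-form $\omega = \mathrm{Res}(\Omega/f)$ is a section of $\mathcal{F}^2\mathcal{T} \subset \mathcal{T}\otimes\mathcal{O}_U$, and since $\int_\gamma \omega = 0$ whenever $\gamma$ is an algebraic cycle (a $(1,1)$-class pairs trivially with a $(2,0)$-form), every period $\int_{\gamma_t}\omega_t$ is a period of this rank-$(22-\rho)$ sub-VHS. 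Because $\mathcal{T}$ has rank $22-\rho$, the sections $\omega, \nabla_{\partial_t}\omega, \dots, \nabla_{\partial_t}^{22-\rho}\omega$ are linearly dependent over the function field, so all periods of $\omega$ satisfy a linear ODE over $\mathbb{C}(t)$ of order at most $22-\rho$. The minimal such ODE is the Picard--Fuchs equation, which by the preceding theorem has order $3$; hence $3 \leq 22-\rho$, i.e. $\rho \leq 19$. Together with $\rho \geq 19$ this forces $\rho = 19$, which is the assertion. (In fact equality $\mathrm{ord}(\mathrm{PF}) = 22-\rho$ holds, by a Torelli-type injectivity of $\gamma \mapsto \int_{\gamma_t}\omega_t$ on $\mathcal{T}_\mathbb{C}$, so the lower bound of Example~\ref{E:OctahedronAndSkewCube} is in principle recovered from the computation alone; but using the example keeps the argument elementary.)

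The one point that genuinely needs care — and the main thing I would verify — is that Equation~\ref{E:picardFuchsEquation} really is the minimal-order ODE annihilating the periods of $\omega$, i.e. that $\omega$, $\partial\omega/\partial t$ and $\partial^2\omega/\partial t^2$ are linearly independent over $\mathbb{C}(t)$; were they dependent, the true Picard--Fuchs equation would have order at most $2$ and the argument above would only yield $\rho \leq 20$. By the derivative formula in the proof of the theorem, this independence is equivalent to the classes $\mathrm{Res}(\Omega/f)$, $(z_1\cdots z_{18})\,\mathrm{Res}(\Omega/f^2)$ and $(z_1\cdots z_{18})^2\,\mathrm{Res}(\Omega/f^3)$ being linearly independent in $H^2(X_t,\mathbb{C})$, which follows from $1$, $z_1\cdots z_{18}$ and $(z_1\cdots z_{18})^2$ being nonzero and linearly independent in the Jacobian ring — the pole-order reduction of Equation~\ref{E:reducePoleOrderToric} first produces a relation only at the third power, where $(z_1\cdots z_{18})^3 \in J$ but no lower power is. This is confirmed by the same \cite{Magma} computation used to prove the theorem. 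As a consistency check, Equation~\ref{E:picardFuchsEquation} is manifestly of the shape of Equation~\ref{E:symmetricSquare}, a symmetric square, exactly as Theorem~\ref{T:DSymmSquare} predicts for a rank-$19$ family.
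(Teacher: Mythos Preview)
Your argument is correct, but it takes a different route from the paper's. Rather than invoking the general bound $\mathrm{ord}(\mathrm{PF})\le 22-\rho$ via the Gauss--Manin connection, the paper argues directly: since K3 surfaces of Picard rank $20$ form a countable set, a one-parameter family with generic rank $20$ would be isotrivial, the periods of $\omega$ would then be constant, and one checks by inspection that a nonzero constant is not a solution of Equation~\ref{E:picardFuchsEquation} (the zeroth-order coefficient $1/(t(t^2-64))$ does not vanish). The paper's route is shorter and requires only a glance at the ODE, at the price of importing the rigidity of singular K3 surfaces as a black box and being somewhat casual about what ``$\omega$ constant'' means in a family. Your route stays entirely within the variation-of-Hodge-structure machinery already developed, makes the general principle $\mathrm{ord}(\mathrm{PF})\le\mathrm{rank}\,\mathcal{T}$ explicit, and---by verifying that $(z_1\cdots z_{18})^k\notin J$ for $k\le 2$---is careful about the minimality of the order, a point the paper's computation leaves implicit.
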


\begin{proof}
By Example~\ref{E:OctahedronAndSkewCube}, a general member of $\mathcal{F}_\diamond$ has Picard rank at least 19.  Families of K3 surfaces of Picard rank $20$ are isotrivial, so if all members of $\mathcal{F}_\diamond$ had Picard rank $20$, $\omega$ would be constant.  But a constant, non-trivial holomorphic two-form $\omega$ cannot satisfy Equation~\ref{E:picardFuchsEquation}.
\end{proof}

We now show that Equation~\ref{E:picardFuchsEquation} is the symmetric square of a second-order differential equation, as predicted by Theorem~\ref{T:DSymmSquare}.  Multiplying Equation~\ref{E:picardFuchsEquation} by $t^2(t^2-64)$ and simplifying, we find that $\omega$ satisfies
\begin{align*}
t^2(t^2-64)\derive{\omega}{3} + 6t(t^2-32)(t^2-64)\derive{\omega}{2} + (7t^2-64)(t^2-64)\frac{\partial \omega}{\partial t} + t(t^2-64)\omega = 0
\end{align*}
Comparing with Equation~\ref{E:symmetricSquare}, we see that the parameters $a_2$, $a_1$, and $a_0$ are given by $a_2 = t(t^2-64), a_1 = 2t^2-64$ and $a_0 = \frac t 4$. Therefore, the symmetric square root of \ref{E:picardFuchsEquation} is
\begin{equation}\label{E:squareRoot}
\derive{\omega}{2} + \frac{(2t^2-64)}{t(t^2-64)}\frac{\partial \omega}{\partial t} + \frac 1 {4(t^2-64)}\omega = 0.
\end{equation}
The symmetric square root is linear and Fuchsian, as expected.

\section{Modularity and Its Geometric Meaning}
All three $ S_4 $ symmetric families of K3 surfaces exhibit ``Mirror Moonshine" (\cite{LY}): the mirror map is related to a hauptmodul for a genus $0$ modular group $\Gamma \subset PSL_2(\bb R)$, which gives a natural identification of the base minus the discriminant locus with $ \bb{H}/\Gamma$ or a finite cover of $ \bb{H}/\Gamma$, where $ PSL_2(\bb R) $ acts on the upper half-plane $ \bb H $ as linear fractional transformations.  Under this identification, the holomorphic solution to the Picard-Fuchs equation becomes a $ \Gamma $-modular form of weight 2.  

In the cases studied in this article, this modularity is not an accident, but rather is a consequence of special geometric properties of the K3 surfaces.

\subsection{Elliptic Fibrations on K3 Surfaces}
We can determine the geometric structures related to modularity by identifying elliptic fibrations with section on these K3 surfaces.  We briefly recall a few facts about elliptic fibrations with section on K3 surfaces.

\begin{definition} An {\em elliptic K3 surface with section} is a triple $ (X, \pi, \sigma) $ where $X$ is a K3 surface, and $ \pi: X \to \bb{P}^1 $ and $ \sigma: \bb{P}^1 \to X $ are morphisms with the generic fiber of $ \pi $ an elliptic curve and $ \pi \circ \sigma = \mathrm{id}_{\bb P^1} $. \end{definition}

Any elliptic curve over the complex numbers can be realized as a smooth cubic curve in $ \bb{P}^2 $ in {\em Weierstrass normal form} 
\begin{equation} \label{Weierstrass} y^2 z = 4x^3 -g_2 x z^2 - g_3 z^3 \end{equation}
Conversely, the equation (\ref{Weierstrass}) defines a smooth elliptic curve provided $ \Delta = g_2^3 -27g_3^2 \neq 0 $.

Similarly, an elliptic K3 surface with section can be embedded into the $ \bb{P}^2 $ bundle $ \bb{P}(\scr O_{\bb P^1} \oplus \scr O_{\bb P^1}(4) \oplus \scr O_{\bb P^1}(6)) $ as a subvariety defined by (\ref{Weierstrass}), where now $ g_2, g_3 $ are global sections of $ \scr O_{\bb{P}^1}(8) $, $ \scr O_{\bb P^1}(12) $ respectively (i.e. they are homogeneous polynomials of degrees 8 and 12).  The singular fibers of $ \pi $ are the roots of the degree 24 homogeneous polynomial $ \Delta = g_2^3 -27g_3^2 \in H^0(\scr O_{\bb P^1}(24)) $.  Tate's algorithm \cite{Tate} can be used to determine the type of singular fiber over a root $ p$ of $ \Delta $ from the orders of vanishing of $ g_2$, $g_3$ , and $\Delta $ at $p$.

\begin{proposition} \label{fibprop} \cite[Lemma 3.9]{CD1} A general fiber of $ \pi $ and the image of $ \sigma $ span a copy of $H$ in $ \mathrm{Pic}(X) $.  Further, the components of the singular fibers of $ \pi$ that do not intersect $ \sigma $ span a sublattice $ S$ of $ \mathrm{Pic}(X) $ orthogonal to this $H$,  and $ \mathrm{Pic}(X)/(H \oplus S) $ is isomorphic to the Mordell-Weil group $MW(X,\pi)$ of sections of $ \pi $.  \end{proposition}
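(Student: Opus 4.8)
The statement is the Shioda--Tate theorem for the elliptic K3 surface $(X,\pi,\sigma)$, and the plan is to prove it in three stages: intersection numbers on $X$, the geometry of the reducible fibers, and restriction to the generic fiber. First I would record the relevant intersection numbers. A general fiber $F$ of $\pi$ is a smooth elliptic curve, and distinct fibers are linearly equivalent and disjoint, so $F^{2}=0$. The section $\sigma$ is a closed immersion, hence $O:=\sigma(\bb{P}^{1})\cong\bb{P}^{1}$ is a smooth rational curve, and adjunction on the K3 surface $X$ (where $K_{X}=0$) gives $O^{2}=2p_{a}(O)-2=-2$. Finally $F\cdot O=1$, since $\pi\circ\sigma=\mathrm{id}_{\bb{P}^{1}}$ forces $\sigma$ to meet each fiber in a single point. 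Thus the subgroup $\langle F,O\rangle$ of $\mathrm{Pic}(X)$ has Gram determinant $-1$; replacing $O$ by $O+F$ (so that $(O+F)^{2}=0$ and $F\cdot(O+F)=1$) identifies it with an embedded copy of $H$. Being unimodular and nondegenerate, this $H$ is an orthogonal direct summand of $\mathrm{Pic}(X)$, which is the first assertion.

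Next I would analyze the vertical classes. For each critical value $p_{i}$ of $\pi$, write the fiber as $\pi^{-1}(p_{i})=\sum_{j}n_{i,j}C_{i,j}$, the $C_{i,j}$ being its irreducible components; since $\pi^{-1}(p_{i})$ is linearly equivalent to a general fiber $F$, we have $\sum_{j}n_{i,j}C_{i,j}=F$ in $\mathrm{Pic}(X)$. By Zariski's lemma the intersection form on $\langle C_{i,j}\rangle_{j}$ is negative semidefinite with one-dimensional radical generated by this fiber class. For every Kodaira fiber type the component $C_{i,0}$ met by $\sigma$ occurs with multiplicity $n_{i,0}=1$, so $C_{i,0}=F-\sum_{j\geq1}n_{i,j}C_{i,j}$ in $\mathrm{Pic}(X)$. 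Hence the classes $\{C_{i,j}:j\geq1\}$ over all $i$, together with $F$, span the same subgroup $V\subseteq\mathrm{Pic}(X)$ as all fiber components. Each such $C_{i,j}$ satisfies $C_{i,j}\cdot F=0$ and $C_{i,j}\cdot O=0$, and components of distinct fibers are disjoint, so these classes span a negative-definite lattice $S$ orthogonal to $H=\langle F,O\rangle$ — this is the second assertion — and $V=\langle F\rangle\oplus S$. Orthogonality together with definiteness gives $H\cap S=0$, so the ``trivial lattice'' $T:=\langle O,F\rangle+S$ is the orthogonal direct sum $H\oplus S$.

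Finally I would carry out the Shioda--Tate computation. Restriction to the generic fiber $X_{\eta}$, an elliptic curve over $K:=\C(\bb{P}^{1})$ with origin $O_{\eta}:=O|_{X_{\eta}}$, gives a homomorphism $\mathrm{Pic}(X)\to\mathrm{Pic}(X_{\eta})$. This map is surjective — line bundles on the opens $\pi^{-1}(U)$, for $\emptyset\neq U\subseteq\bb{P}^{1}$ open, exhaust $\mathrm{Pic}(X_{\eta})$ and extend across the remaining divisor to the smooth surface $X$ — and its kernel is exactly the vertical subgroup $V$, since a class restricting to $0$ on $X_{\eta}$ is supported on finitely many fibers. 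Now $\mathrm{Pic}(X_{\eta})=\Z[O_{\eta}]\oplus\mathrm{Pic}^{0}(X_{\eta})$ by the degree map, $\mathrm{Pic}^{0}(X_{\eta})\cong X_{\eta}(K)$ via $P\mapsto[P]-[O_{\eta}]$, and $X_{\eta}(K)\cong MW(X,\pi)$ because a $K$-point of $X_{\eta}$ is a rational section of $\pi$, which is automatically a morphism as $\bb{P}^{1}$ is a smooth curve and $X$ is proper. Composing, $\mathrm{Pic}(X)\to\mathrm{Pic}(X_{\eta})\to\mathrm{Pic}(X_{\eta})/\Z[O_{\eta}]\cong MW(X,\pi)$ is surjective with kernel $V+\Z[O]=(\langle F\rangle\oplus S)+\Z[O]=\langle F,O\rangle\oplus S=H\oplus S=T$, so $\mathrm{Pic}(X)/(H\oplus S)\cong MW(X,\pi)$; tracing a section $P$ through the maps shows the isomorphism carries $P$ to its own class in $X_{\eta}(K)$, as expected.

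The step I expect to be the main obstacle is the generic-fiber exact sequence: showing that $\mathrm{Pic}(X)\to\mathrm{Pic}(X_{\eta})$ is surjective with kernel precisely the vertical classes, and identifying the multiplicity-one component met by $\sigma$ carefully enough that the trivial lattice is literally $H\oplus S$ rather than a finite-index overlattice. By contrast, the intersection-number computations and the unimodular splitting of $H$ are routine, and the identification of $\mathrm{Pic}^{0}$ of a genus-one curve with its group of rational points is standard.
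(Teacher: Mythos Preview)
Your argument is correct and is the standard Shioda--Tate proof. Note, however, that the paper does not supply its own proof of this proposition: it is stated as a citation of \cite[Lemma 3.9]{CD1} and used as background for the elliptic-fibration analysis in Example~\ref{fibrations}. So there is nothing in the paper to compare against beyond the bare statement; your proof fills in exactly what the reference is meant to provide, and the three stages you outline (the Gram matrix of $\langle F,O\rangle$, Zariski's lemma plus the multiplicity-one component meeting the section, and the restriction-to-generic-fiber exact sequence identifying $\mathrm{Pic}^{0}(X_{\eta})$ with $MW(X,\pi)$) are the standard ingredients.
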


When K3 surfaces are realized as hypersurfaces in toric varieties, one can construct elliptic fibrations combinatorially from the three-dimensional reflexive polytope $ \diamond $. As before, let $ \Sigma $ be a refinement of the fan over faces of $ \diamond $.  Suppose $ P \subset N $ is a plane such that $ \diamond \cap P $ is a reflexive polygon $ \nabla $, let $ m $ be a normal vector to $P$,  and let $ \Xi $ be the fan over faces of $ \nabla $.  Then $P$ induces a torus-invariant map $ V_\Sigma \to \bb P^1 $  with generic fiber $ V_\Xi $, given in homogeneous coordinates by
\begin{equation}\pi_P:  (z_1, \ldots z_r) \mapsto \left[ \prod_{\langle v_i, m \rangle >0} z_i^{\langle v_i, m \rangle}, \prod_{\langle v_i, m \rangle <0} z_i^{-\langle v_i, m \rangle} \right] \end{equation}
Restricting $ \pi_P $ to an anticanonical K3 surface, we get an elliptic fibration.  If $ \nabla  $ has an edge without interior points, this fibration will have a section as well.  See \cite{Kreuzer} for more details.

\begin{example} \label{fibrations} We can use such an elliptic fibration with section to study the ring structure of the Picard group of a generic member $X$ of the family defined by (\ref{E:S4pencil}).  The map $ \pi: V_\Sigma \to \bb{P}^1 $ defined by the procedure above with $ m = (0,1,0)$, is an elliptic fibration with section $ \sigma: \bb{P}^1 \to X $.  

For this particular $ \pi $, examining the singular fibers gives an embedding of the rank 19 lattice $ H \oplus S = H \oplus D_6 \oplus D_6 \oplus A_3 \oplus A_1 \oplus A_1 $ into $\mathrm{Pic}(X) $.  Because this fibration has more than one section, $ H \oplus S \neq \mathrm{Pic}(X) $.  To determine $ MW(X, \sigma) = NS(X)/(H \oplus S) $, we note that the order of this group must divide 16, the square root of the determinant of the intersection matrix of $ H \oplus S $.  By putting the fibration into the Legendre normal form
\begin{equation} \label{K3legendre} y^2z = x(x+z)(x+\frac{stz}{16(1+t)^2}) \end{equation}
 one can see immediately that there are three two-torsion sections, namely $ [0,1,0] $, $ [0,0,1] $, $[-1,0,1] $, and $ [-\frac{st}{16(1+t)^2},0,1] $.  Applying results of \cite{Hitching} show there are no four- or eight-torsion sections.  Hence $ MW(X, \pi) \simeq \bb Z/2 \times \bb Z/2 $.  While this still doesn't completely determine $\mathrm{Pic}(X)$, we know now that it a rank 19 lattice of signature (1,18) with discriminant $ \pm 16 $ which contains the sublattice $  H \oplus D_6 \oplus D_6 \oplus A_3 \oplus A_1 \oplus A_1 $. \end{example}

\subsection{Kummer and Shioda-Inose Structures Associated to Products of Elliptic Curves}
Let $ E_1 $ , $E_2 $ be elliptic curves, thought of as quotients $ \bb{C}/(\bb Z \oplus \bb Z \tau_1) $, $ \bb{C}/(\bb Z \oplus \bb Z \tau_2) $ for $ \tau_1, \tau_2 \in \bb H $.  The action of $ \{ \pm 1 \} $ on $ A = E_1 \times E_2 $ has sixteen fixed points, leading to sixteen nodes on the quotient $ \overline{A} = A / \{ \pm 1 \} $.  The minimal resolution of $ \overline{A} $ is a K3 surface $ Km(A) $ called the {\em Kummer Surface} of $A$.  The Picard group of $ Km(A) $ contains a lattice $DK $ of rank 18, generated by the sixteen exceptional curves of the resolution, together with the strict transforms of the images of $ E_1 \times \{pt \} $, $ \{pt \} \times E_2 $.  Conversely, any K3 surface $X$ with a primitive embedding $DK \into Pic(X) $ is isomorphic to $ Km(A) $ for some $ A = E_1 \times E_2 $ \cite[Prop 3.21]{CD1}.

A Kummer surface carries a symplectic involution $ \beta $, with the minimal resolution of $ Km(A)/\beta $ again a K3 surface $ SI(A) $, called the {\em Shioda-Inose surface} of $A$.  \cite{CD1} shows that $ X \simeq SI(A) $ for $ A = E_1 \times E_2 $ if and only if the rank 18 lattice $ H \oplus E_8 \oplus E_8 $ embeds primitively into $ Pic(X) $.  Generically, this will be exactly the Picard lattice, and so the transcendental lattice will be $ H \oplus H $.

If $ E_1$, $ E_2 $ are $n$-isogenous, i.e. if there exists a degree $n$ morphism $ E_1 \to E_2 $, then the Picard ranks of $ Km(A) $ and $ SI(A) $ have rank 19, with an extra generator corresponding to the strict transform of the graph of the isogeny.  In this case, the Picard lattice of the Shioda-Inose surface will generically be $ H \oplus E_8 \oplus E_8 \oplus \langle -2n \rangle $, and the transcendental lattice will be $ H \oplus \langle 2n \rangle $.

$ E_1 $, $ E_2 $ are $n$-isogenous if and only if, up to the action of $ PSL_2(\bb Z ) $ on $ \bb H$, $ \tau_2 = \frac{-1}{n \tau_1} $.  (Note then that the relation is symmetric; given an isogeny $ E_1 \to E_2$, there exists a dual isogeny $ E_2 \to E_1$.)   Thus if
\begin{equation} \Gamma_0(n) = \left\{  \left.\left(  \begin{array}{cc} a & b \\ c& d \end{array} \right) \in \mathrm{PSL}_2(\bb{Z})
 \: \right| \: c \cong 0 \: (\mathrm{mod} \: n) \right\}  \end{equation}
then the moduli space of ordered pairs of $n$-isogenous elliptic curves is given by $X_0(n) =  \bb H/ \Gamma_0(n) $.  To form the moduli space of products of $n$-isogenous elliptic curves, we need to quotient also by the involution $ \tau \mapsto \frac{-1}{n \tau} $ on $ X_0(n) $.  We call the function $ w_h: \bb H \to \bb H $ defined by $ w_h(\tau) = \frac{-1}{h \tau} $ an {\em Atkin-Lehner map}.  Note that $ w_h $ can be represented by the matrix $ \left( \begin{array}{cc} 0 & \frac{-1}{\sqrt{h}} \\  \sqrt{h} & 0 \end{array} \right) \in PSL_2(\bb R) $, and also that if $ h | n $, then $w_h $ descends to an involution on  $ X_0(n) $.  We write $ \Gamma_0(n) + h $ for the subgroup of $ PSL_2(\bb R) $ generated by $ \Gamma_0(n) $ and $ w_h $, and $ X_0(n) + h $ for the quotient of $ X_0(n) $ by $w_h$ (or equivalently for $ \bb H / (\Gamma_0(n)+h) $.

Thus, $ X_0(n)+n $ is the moduli space of products of $n$-isogenous elliptic curves, and hence also of the Kummer surfaces and Shioda-Inose surfaces associated to such products.  It is important to note, however, that while the transcendental lattices of $ E_1 \times E_2 $ and $ SI(E_1 \times E_2) $ are isomorphic, the transcendental lattice of $ Km(E_1 \times E_2) $ differs from these by scaling by 2.

\subsection{Modular Groups Associated to our Families of K3 Surfaces}
For Examples 3.5  and 3.7, $ \Gamma $ is  $ \Gamma_0(6)+6$ and $ \Gamma_0(6)+3$ respectively (\cite[Prop. 5.4]{HLOY}, \cite[Thm. 2]{Verrill}).

 In these two cases, explicit calculations of Picard lattices in \cite{PS} and \cite{Verrill} show the K3 surfaces have Shioda-Inose structures associated to product of 6- and 3-isogenous elliptic curves respectively. The transcendental lattices of the generic K3's in these pencils are $ H \oplus  \langle 12 \rangle $ and $ H \oplus \langle 6 \rangle $ respectively.  The role of $ \Gamma_0(6)+6 $ for Example 3.5, then, follows from identifying the base of the family with a compactification of the moduli space $X_0(6)+6 $ of $SI(E_1 \times E_2) $ for $E_1 $, $E_2$ $6$-isogenous.  Similarly, in the case of Example 3.7, $ \Gamma_0(6)+3 \subset \Gamma_0(3)+3$, so this example realizes the base of the family as a covering of the moduli space of $ SI(E_1 \times E_2) $ for $ E_1 $ , $E_2$ 3-isogenous.

Example 3.9 is somewhat different.  In this case, the K3 surfaces are not Shioda-Inose surfaces but Kummer surfaces.  To see this, we will use the elliptic fibration of Ex. \ref{fibrations}.  Elliptic fibrations on $ Km(E_1 \times E_2) $ have been classified by \cite{KS}, where in particular they show that generically $ Km(E_1 \times E_2) $ has a fibration giving lattice $ H \oplus D_6 \oplus D_6 \oplus (A_1)^{\oplus 4} $ and Mordell-Weil group $ \bb Z/(2) \oplus \bb Z / (2) $.  If the two elliptic curves are presented in Legendre normal form
\[ y^2 = x(x-1)(x-\lambda_i) \]
then \cite{KS} gives the  Legendre equation for this fibration as 
\[ Y^2 = X(X- u(u-1)(\lambda_2 u - \lambda_1))(X-u(u-\lambda_1)(\lambda_2 u -1)) \]
(where $u$ is an appropriately chosen parameter on the base of the fibration).

 Comparing with our fibration, we see that our family then sits inside the family of $ Km(E_1 \times E_2) $ as a locus where two of the $ A_1 $ singular fibers collide to give an $ A_3 $ singular fiber.  The only possibilities are for $ \lambda_1 = \lambda_2 $ or $ \lambda_1 = 1/\lambda_2 $.  In either case, $ E_1 $ and $ E_2 $ must be isomorphic.  So our family is the family of K3 surfaces of the form $ Km(E \times E) $.  

To determine for what group $ \Gamma $ this family is modular, we consider the symmetric square root (11) of the Picard-Fuchs equation.  By scaling the solutions appropriately, we may put this equation into a projective normal form $ \frac{d^2 f}{dt^2} + Q(t) f= 0$, where
\begin{equation} Q(t) = \frac{\left(t^2-8 t+64\right) \left(t^2+8 t+64\right)}{4 (t-8)^2 t^2 (t+8)^2} \end{equation}
Changing variables via $ t = \frac{1}{i z} $ and comparing with the table of \cite{LW}, we see that
\begin{equation} \Gamma = \Gamma_0(4|2) = \left\{ \left. \left( \begin{array}{cc} a & b/2 \\ 4c & d \end{array} \right) \in PSL_2(\bb{R}) \: \right| \: a,b,c,d \in \bb Z \right\} \end{equation}

\end{document}